\def\ti{\tilde}
\def\to{\rightarrow}
\def\pa{\partial}
\def \To{\underset
\sphericalangle\longrightarrow}
\def \nlhat {\overset{\ \curlywedge}}
\def\R{{\mathbb R}}
\def\C{{\mathbb{C}}}
\def\N{{\mathbb{N}}}
\def\DD{{\mathcal{D}}}
\def\EE{{\mathcal E}}
\def\e{\varepsilon}
\def\d{\delta}
\def\DD{\Delta}
\def\L{\Lambda}
\def\l{\lambda}
\def\g{\gamma}
\def\G{\Gamma}
\def\a{\alpha}
\def\b{\beta}
\def\lan{\lambda_n}
\def\S{\text{\rm Sinc}}
\theoremstyle{plain}
\newtheorem{lemma}{Lemma}
\newtheorem{theorem}{Theorem}
\newtheorem{corollary}{Corollary}
\newtheorem{remark}{Remark}
\numberwithin{equation}{section}
\author{A.~Poltoratski}
\address{University of Wisconsin\\ Department of Mathematics\\ Van Vleck Hall\\
480 Lincoln Drive\\
Madison, WI  53706\\ USA }
\email{poltoratski@wisc.edu}
\thanks{The author was partially supported by
NSF Grant DMS-2244801.}
\begin{document}

\begin{abstract} We prove pointwise convergence for the scattering data of a Dirac system of differential equations. Equivalently, we
prove an analog of Carleson's theorem on almost everywhere convergence of Fourier series for a version of the non-linear Fourier transform. Our proofs are based
 on the study of resonances of Dirac systems. 
\end{abstract}

\title{Pointwise convergence of  the non-linear Fourier transform}


\maketitle

\section*{Introduction}\label{secInt}

In this paper we study pointwise convergence of the scattering data for a Dirac system of differential equations. Scattering transforms
play an important role in the study of various differential operators and related problems. Extensive evidence
generated in this area during  the last several decades suggests that scattering
can be viewed as a non-linear version of the classical Fourier transform, see for instance \cite{AKNS, ZS, T, TT}.

These connections lead to natural problems of establishing versions of the classical results of Fourier analysis in the non-linear settings of scattering.
Such problems have been appearing in various forms for most of the last century and remain an object of active research today, see for instance \cite{S} for further references. As an example one can look at the non-linear version of Parseval's identity \eqref{eqParseval}, which can be traced as far back as the work of Verblunski in the 1930s, and a non-linear analog
of Hausdorff-Young inequality, which appears in more recent work of Christ and Kiselev  \cite{CK, Ch}.

One of the fundamental results of classical Fourier analysis is the theorem by L. Carleson (1966, \cite{C}) which says that
the Fourier transform
$$\hat f_T(x)=\int_{-T}^T f(s)e^{-ixs}ds$$
converges to $\hat f(x)=\hat f_\infty(x)$ as $T\to\infty$ at almost every point $x\in\R$ for any $f\in L^2(\R)$.
Answering a question by Luzin from 1915, Carleson's theorem
finished a long and turbulent story of partial results and counterexamples created by some of the most prominent mathematicians of the 20th century.
After more than fifty years the existing proofs are still challenging despite several significant contributions by other analysts, including those
by Fefferman (1973, \cite{F}), and by Lacey and  Thiele (2000, \cite{LT}). The theorem and its proofs opened a variety of directions
for further research, see for instance \cite{D, OSTTW} for some of the recent developments and references.

Our main goal is to prove an analog of this classical result in the scattering setting, i.e., to establish pointwise convergence for a version of the non-linear Fourier transform, see Theorem \ref{main} in Section \ref{NLFT}.

Our result implies in particular that generalized eigenfunctions of Dirac systems on the half-line with real $L^2$-potentials are bounded for almost every real spectral parameter, answering a question by Muscalu, Tao and Thiele \cite{MTT1}.
For $1\leq p<2$ this follows from the work of Christ and Kiselev \cite{CK, CK1}.

Convergence in the d-adic model, along with a statement on the d-adic maximal operator, was established
in \cite{MTT}. Further discussion of these problems in the context of Ablowitz-Kaup-Newell-Segur (AKNS) systems can be found
in the book by Muscalu and Schlag (\cite{MS}, Vol. 2, Chapter 5).

Our proof is independent from the linear proofs and is based on a study of resonances of Dirac systems using the methods of spectral
problems for differential operators and complex function theory. While our tools include the basics of the Krein-de Banges theory and its later developments in \cite{MIF1, MIF2, MP},
we are not using any of the deep results of the theory or any of the recent advances of the non-linear Fourier analysis. The necessary background material is  presented in Sections
\ref{secDS}, \ref{secHB} and \ref{secMIF}.

Convergence results are usually closely related to estimates of the maximal operator, as was the case with the original version of Carleson's theorem. While some of such estimates for the non-linear transform can be extracted from the results
of this paper,  sharpness and full extent of such estimates remain unclear and will be studied elsewhere. Let us only mention
that pointwise convergence trivially implies finiteness of the maximal function at almost every point, which was unknown up to now.

\qquad

The plan of our proof is as follows. First we prove universality-type results showing that near almost every point on $\R$
the reproducing kernels of the de Branges spaces corresponding to  a Szeg\"o weight
resemble standard sinc functions. 
Since $L^2$ potentials in Dirac systems correspond exactly to Szeg\"o spectral measures, this implies corresponding approximations in our settings.

Approximations for the reproducing
kernels do not imply approximations for the Hermite-Biehler functions generating the space per se, but under an additional assumption of existence of a resonance
of the system
near a point $s\in\R$, those functions can be approximated by sines and cosines. 

Next, we consider the non-linear Fourier transform
of the restriction  of the potential function $f$ to an interval $(t_1,t_2)$ during which the resonance moves by a fixed distance near $s$. Our approximate knowledge of the Hermite-Biehler functions makes it possible to find similar explicit approximants for  the non-linear Fourier transform. 

At this step a contradiction is obtained by noticing that the expressions for the non-linear Fourier transform of $f$ on the interval $(t_1,t_2)$, obtained under the assumption of the presence of resonances near $s$ for a large $t$, depend not only on 
the values of $f$  on $(t_1,t_2)$ but also on the position and the direction of the motion of the resonance at the initial point $t_1$, which should not be the case. We conclude that 
 for a.e. point $s\in\R$ the system restricted to $(0,t)$ cannot have a resonance  within the distance of $\asymp 1/t$ from $s$ for large enough $t$. 
 
 To finish the proof, we show that for a.e. $s\in\R$, the absence of resonances within $\asymp 1/t$ from $s$ for large $t$ is equivalent to the convergence of
  the non-linear Fourier transform at $s$.

The contents of the sections:

\begin{itemize}

\item In Section \ref{secDS} we introduce the main object of the paper, a Dirac system on $\R_+$ with real potential.

\item Section \ref{secHB} contains basics of Krein-de Branges theory for Dirac systems.

\item Section \ref{secMIF} contains a definition and a brief discussion of meromorphic inner functions (MIFs).

\item In Section \ref{secDIF} we define families of Dirac inner functions satisfying a Riccati equation. 

\item The definition of the scattering matrix and the non-linear Fourier transform, together with their relations with
the Hermite-Biehler functions are presented in Section \ref{NLFT}.

\item In Section \ref{secUR} we prove universality results characterizing the behavior of reproducing kernels of the de Branges
spaces corresponding to the Dirac system near regular points of the spectral measure on the real line.

\item Universality results are translated into approximations of the Hermite-Biehler functions by elementary sines and cosines in Section \ref{secE}.

\item Simultaneous approximations for Hermite-Biehler functions corresponding to Neumann and Dirichlet initial conditions
are obtained in Section \ref{secJA}.

\item In Section \ref{secPMT} we finish the proof of the main result, Theorem \ref{main}.

\item The appendix contains a technical lemma used in Section \ref{secPMT}.

\end{itemize}

\textbf{Acknowledgments:} The approach used in this paper stems from our joint work with Nikolai Makarov, to whom I am grateful for introducing
me to  spectral and scattering problems. I am thankful to Sergey Denisov and Fedor Nazarov for useful discussions related to this work.
I truly appreciate  the help of Lukas Mauth, Gevorg Mnatsakanyan and Christoph Thiele, who read the early versions of the paper and made a number of valuable corrections.
Initial progress in this work was achieved during my sabbatical stay at the  University of Helsinki in April-June of 2019. I am grateful to the math department and especially to my host Eero Saksman for the inspiring working environment and hospitality.

\section{Real Dirac systems}\label{secDS}

We study one of the basic models of scattering corresponding to the 'real' Dirac system on the right half-line $\R_+$,
\begin{equation} \Omega \dot X =z X - QX,
\label{eqDS}\end{equation}
where $z\in\C$ is a spectral parameter,
$$ \Omega=\begin{pmatrix} 0 & 1 \\ -1 & 0 \end{pmatrix},\textrm{ and  }Q(t)=\begin{pmatrix} 0 & f(t) \\ f(t) & 0 \end{pmatrix} $$
for some real-valued locally summable function $f$. A slightly more general form of the system allows
for a locally summable functions $g$ and $-g$ on the main diagonal of $Q$. The function $f+ig$
is then called the potential of the system.
To simplify our exposition, we keep the potential real, although our methods will work similarly
for the general potential.

We will be most interested in  the scattering
problems corresponding to the case $f\in L^2(\R_+)$. For each value of the spectral parameter $z$ the unknown function $$X(t,z)=\begin{pmatrix} u(t,z) \\ v(t,z) \end{pmatrix}$$
is assumed to be differentiable on $\R_+$  with respect to the time variable $t$ and satisfy an initial condition  $X(0,z)=c\in \R^2$.

A special role will be played by solutions  satisfying the Neumann, $$X(0,z)=\begin{pmatrix} 1 \\ 0 \end{pmatrix},$$
and Dirichlet, $$X(0,z)=\begin{pmatrix} 0 \\ 1 \end{pmatrix},$$ initial conditions.
The matrix function $M$ whose columns are the Neumann and Dirichlet solutions, i.e., the matrix-function which solves
\eqref{eqDS} with the initial condition $$M(0,z)=\begin{pmatrix} 1 & 0 \\ 0 & 1 \end{pmatrix},$$ is called
the fundamental matrix, or the transfer matrix,  of the system.

Any Dirac system can be rewritten in the so-called canonical form and henceforth become a part of the Krein-de Branges theory of
canonical systems, see \cite{dBr, DM, R, Ro}. What follows is a brief outline of the basics of the theory as applicable to the subclass of Dirac systems.

\section{Hermite-Biehler functions and spectral measures}\label{secHB}

If $$X(t,z)=\begin{pmatrix} u(t,z) \\ v(t,z) \end{pmatrix}$$ is a solution of \eqref{eqDS}, with an initial condition $X(0,z)=c\in \R^2$, then
for each fixed $t\in \R_+$
the function $$H(t,z)=u(t,z)-iv(t,z)$$ is an Hermite-Biehler entire function, i.e., an entire function satisfying
$$|H(t,z)|>|H(t,\bar z)|$$
for all $z$ from the upper half-plane $\C_+$. Throughout this paper we will denote by $E(t,z)$ and $\ti E(t,z)$ the functions
corresponding to the Neumann and Dirichlet conditions at $0$ correspondingly.

According to Krein's formula for the exponential type (see for instance \cite{R}, Theorem 11), the functions $E(t,\cdot), \ti E(t,\cdot)$ have
exponential type $t$.

If $$M(t,z)=\begin{pmatrix} A(t,z) & B(t,z) \\ C(t,z) & D(t,z) \end{pmatrix}$$ is the fundamental matrix then
$$E(t,z)=A(t,z)-iC(t,z)\textrm{ and }\ti E(t,z)=B(t,z)-iD(t,z).$$
Here $A,B,C,D$ are real (real-valued on the real line) entire functions
analogous to sine and cosine. This analogy becomes precise in the free case $f\equiv 0$ when $$E(t,z)=e^{-itz}, \ti E(t,z)=-ie^{-itz},$$
$$A(t,z)=D(t,z)=\cos tz\textrm{ and }C(t,z)=-B(t,z)=\sin tz.$$

We will use the standard notation $H^\#(z)$ for the Schwarz reflection of an entire function $H^\#(z)=\bar H(\bar z)$. Using this notation,
$$A=\frac{E+E^\#}2\textrm{ and } C=\frac{E-E^\#}{2i}.$$

It follows from \eqref{eqDS} that
\begin{equation}\det M(t,z)= 1\label{det=1}\end{equation}
for all $t$ and $z$.
Rewritten in terms of $E$ and $\ti E$, this relation becomes
\begin{equation}\det\begin{pmatrix} E &  \ti E \\ E^\# & \ti E^\#
\end{pmatrix}\equiv 2i.\label{eqDet2i}\end{equation}

Associated with every Hermite-Biehler function $E(z)$ one can consider a de Branges space $B(E)$, a Hilbert space of entire functions defined as
$$B(E)=\{F| F\text{ is entire, }F/E\in H^2(\C_+),\ F^\#/E\in H^2(\C_+)\},$$
where $H^2(\C_+)$ denotes the standard Hardy space in the upper half-plane. The Hilbert structure in $B(E) $ is inherited from $H^2$:
$$<F,G>_{B(E)}=\int_{\R}F(x)\bar G(x)\frac{dx}{|E(x)|^2}.$$
Each $B(E)$ consists of functions of exponential type at most that of $E$. In particular, in our settings $B(E(t,\cdot))$ contains functions
of exponential type at most $t$.

With this structure $B(E)$ is a reproducing kernel space: point evaluations are bounded linear functionals on the space and, as follows from  the representation theorem, for each $\l\in\C$ there exists $K(\l,\cdot)\in B(E)$ such that for any $F\in B(E)$,
$$F(\l)=<F,K(\l,\cdot)>_{B(E)}.$$
The function $K(\l,z)$ is called the reproducing kernel for the point $\l$. In the case of the de Branges space $B(E)$, $K(\l,z)$  has the formula
$$K(\l,z)=\frac{1}{2\pi i}\frac{  E(z) E^\#(\bar \l)- E^\#(z)E(\bar \l)}{\bar \l -z }=\frac{1}{\pi }\frac{A(z) C(\bar \l) - C(z)A(\bar \l)}{\bar \l-z},
$$
where $A=(E+E^\#)/2$ and $C=(E^\#-E)/2i$ are real entire functions such that $E=A-iC$.

In our settings each space  in the chain $B(E(t,\cdot))$ (in $B(\ti E(t,\cdot))$), possesses its own set of
reproducing kernels $K(t,\l,z)$ ($\ti K(t,\l,z)$).

In the free case $f\equiv 0$ the Hermit-Biehler functions produced by the system \eqref{eqDS} are the exponential functions $E(t,z)=e^{-itz}
\ (\ti E(t,z)=-ie^{-itz})$ and $B(E(t,z))$ is the standard Paley-Wiener space $PW_t$. The reproducing
kernel of $PW_t$ is the sinc function
\begin{equation}\S(t,\l,z)=\frac 1\pi\frac{\sin \left[t(z-\bar\l)\right]}{z-\bar\l}.\label{eqsinc}\end{equation}

It follows from the definition of $K(\l,z)$ that
\begin{equation}\begin{gathered}||K(\l,\cdot)||_{B(E)}=||K(\l,\cdot)/E||_{H^2}=\sqrt{K(\l,\l)}= \\
=\sup_{F\in B(E), ||F||_{B(E)}\leq 1}|F(\l)|.\label{eq1700}\end{gathered}\end{equation}

We denote by $\Pi$ the Poisson measure on $\R$, $d\Pi(x)=dx/(1+x^2)$.
We call a measure $\mu$ on $\R$ Poisson-finite if
$$\int\frac{d|\mu(x)|}{1+x^2}<\infty.$$

The family of de Branges spaces $B(E(t,z)), t\in\R_+,$ possesses a unique positive Poisson-finite measure $\mu$ on $\R$
such that the embedding $B(E(t,z))\to L^2(\mu)$ is isometric for all $t\in\R_+$.
Similarly, the family $B(\ti E(t,z))$ gives rise to a unique measure $\ti\mu$.
The measures  $\mu$ and $\ti\mu$ are called
 the spectral measures for the Dirac system \eqref{eqDS} corresponding to the Neumann and Dirichlet initial conditions at $0$
 correspondingly. See \cite{dBr}, \cite{R}, \cite{MIF1} or \cite{MP} on the basics of spectral theory for canonical systems
 and \cite{Den} or \cite{Ro} on the reduction of Dirac systems to the canonical case.
 
Let $w(x)$ be the density of the absolutely continuous part of $\mu$, $d\mu_{ac}=w(x)dx$, and
let $\ti w$ be the density for the absolutely continuous part of $\ti\mu$.
For $f\in L^2(\R_+)$ the spectral measures satisfy the Szeg\"o condition
$$\log |w|, \ \log |\ti w|\in L^1(\Pi),$$
see the paper by Denisov \cite{Den} for this and many related results.
In particular, $w,\ti w \neq 0$ a.e. on $\R$.

It is well known that in the case of locally summable potentials, and thus in our case when $f\in L^2(\R_+)$,
the spaces $B( E(t,z))$ and $B(\ti E(t,z))$ are equal to the standard Paley-Wiener spaces $PW_t$ as sets (but with different norms).
Indeed, \eqref{eqE} below implies
$$|E(t,x)|, |\ti E(t,x)|\leq e^{\int_0^t|f(s)|ds}$$
 for $x\in\R$ and the condition $\det M=1$ implies that $E$ and $\ti E$
are bounded from below on $\R$. Therefore the norms in $B(E)$ and $B(\ti E)$ are equivalent to the norm in $L^2(\R)$. Together with the property that
$B(E(t,z))$ and $B(\ti E(t,z))$ consist of functions of exponential type at most $t$, we obtain that they are equal to $PW_t$ as sets.

Hence, for the reproducing kernels $K(t,\l,z)$ of $B( E(t,z))$, \eqref{eq1700} can be rewritten as
$$||K(t,\l,\cdot)||_{B(E(t,z))}=\sup_{f\in PW_t, ||f||_{L^2(\mu)}\leq 1}|f(\l)|
$$
and similarly for the reproducing kernels  $\ti K(t,\l,z)$ of $B(\ti  E(t,z))$.

The  equation \eqref{eqDS} rewritten for the Hermite-Biehler function $$E=E(t,z)=u(t,z)-iv(t,z)$$ becomes
$$\frac \pa {\pa  t} E(t,z)= -(zv(t,z) -fu(t,z))-i (zu(t,z)-fv(t,z)),$$
which yields
\begin{equation} \frac \pa {\pa  t} E(t,z)=-izE(t,z) + f (t)E^\# (t,z).\label{eqE}\end{equation}
The initial condition is $E(0,z)=1$ for $E$ and $\ti E(0,z)=-i$ for $\ti E$. 


We denote by $\arg E$ the continuous branch of the argument of $E$ in
the closed upper half-plane satisfying $\arg E(t,0)=0$ ($\arg \ti E(t,0)=-\pi/2$ for $\ti E$).
If $E$ satisfies \eqref{eqE} then for $|E(t,x)|,\ x\in\R,$ we have
$$\frac \pa {\pa  t} |E(t,x)|=f(t)|E(t,x)|\cos [2\arg E(t,x)],$$
which results in
\begin{equation} |E(t,x)|=|E(t_0,x)|\exp\left[\int_{t_0}^tf(t)\cos [2\arg E(t,x)] dt\right],
\label{eqmodE}\end{equation}
for all $t>t_0\geq 0$.

For the continuous branch of $\arg E(t,z)$ on $\R$, \eqref{eqE} gives
\begin{equation}
	 \frac \pa {\pa  t} \arg E(t,x)=-x - f (t)\sin (2\arg E (t,x)).
\label{eqArgE}
\end{equation}

\section{Meromorphic inner functions}\label{secMIF}

Recall that an inner function in the upper half-plane is a bounded analytic function whose non-tangential boundary values
are unimodular a.e. on $\R$, see for instance \cite{G}. An inner function in $\C_+$ is called a meromorphic inner function (MIF) if it can be
continued meromorphically into the whole complex plane. It can be shown that every MIF has a representation
$$\a e^{iaz} \prod \frac{\bar\lan}{\lan}\frac{z-\lan}{z-\bar\lan},
$$
where $\a$ is a unimodular complex constant, $a$ is a positive number, $\{\lan\}$ is a sequence of points in $\C_+$
tending to infinity as $n\to\infty$ and satisfying
the Blaschke condition
$$\sum \frac {\Im\  \lan}{1+|\lan |^2}<\infty.$$

MIFs appear in spectral problems for differential operators
with compact resolvents, see  \cite{MIF1, MIF2}  as well as problems of Fourier analysis,
see \cite{GAP, Type, CBMS}.

Since every MIF is analytic in a neighborhood of the real line, their boundary values and derivatives
are well defined everywhere on $\R$. We will need the following simple lemma relating their derivatives and zeros.

For a sequence $\L=\{\lan\}\subset \C_+$ satisfying the Blaschke condition we denote by $B_\L$ the corresponding
Blaschke product
$$B_\L= \prod \frac{\bar\lan}{\lan}\frac{z-\lan}{z-\bar\lan}.
$$


\begin{lemma} \label{lemInner} Let $\theta$ be a MIF and let $1>\e>0$. Let $x,\ y\in\R$ be such that
\begin{equation}|\theta '(x)|/|\theta'(y)|>1+\e.\label{eq2}\end{equation}
Then  the
ball $\{|z-x|<3|y-x|/\e\}$
contains at least one zero of $\theta$.

\end{lemma}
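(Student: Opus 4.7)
The approach is to use the canonical factorization of a MIF and the classical formula for its derivative on $\R$. Writing $\theta(z)=\alpha e^{iaz}\prod_{n}\frac{\bar\lambda_{n}}{\lambda_{n}}\frac{z-\lambda_{n}}{z-\bar\lambda_{n}}$ and noting that $\theta(x)=e^{i\phi(x)}$ for a real, increasing phase on $\R$, logarithmic differentiation yields the identity
$$
|\theta'(x)|\;=\;\phi'(x)\;=\;a+\sum_{n}\frac{2\,\mathrm{Im}\,\lambda_{n}}{|x-\lambda_{n}|^{2}}.
$$
This reduces the lemma to a comparison of two positive series, one indexed at $x$ and the other at $y$, where each summand depends on the location of a single zero $\lambda_{n}\in\C_{+}$.

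I would then argue by contraposition. Set $R=|y-x|/\e$ and assume that $\theta$ has no zero in $\{|z-x|<R\}$, so $|x-\lambda_{n}|\ge R$ for every $n$. The triangle inequality plus $|y-x|\le\e|x-\lambda_{n}|$ immediately gives
$$
|y-\lambda_{n}|\le|x-\lambda_{n}|+|y-x|\le(1+\e)\,|x-\lambda_{n}|,
$$
so that $|x-\lambda_{n}|^{-2}\le(1+\e)^{2}|y-\lambda_{n}|^{-2}$ term by term. Since the exponential constant $a$ obeys $a\le(1+\e)^{2}a$ trivially, summing these estimates against the positive weights $2\,\mathrm{Im}\,\lambda_{n}$ produces
$$
|\theta'(x)|\;\le\;(1+\e)^{2}\,|\theta'(y)|,
$$
and comparison with the hypothesis forces the existence of at least one zero inside the disc.

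The main obstacle is a cosmetic but genuine mismatch of constants: the clean triangle-inequality argument sketched above delivers the factor $(1+\e)^{2}$ instead of the $(1+\e)$ written in the statement, the worst case being a zero lying close to $\R$ just on the far side of $x$ from $y$, where $\mathrm{Im}\,\lambda_{n}$ is small relative to $|x-\lambda_{n}|$. To obtain exactly $(1+\e)$ one would have to exploit the strict positivity of $\mathrm{Im}\,\lambda_{n}$ more carefully, or equivalently enlarge the radius by a constant factor; for the use made of this lemma only the order $\asymp|y-x|/\e$ of the radius matters, so I would either absorb the extra $(1+\e)$ into the choice of $\e$ or present the estimate with the radius $c|y-x|/\e$ for an adjusted absolute constant $c$, which is the only place the proof is non-routine.
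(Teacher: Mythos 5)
Your argument follows essentially the same path as the paper's proof: the canonical factorisation of a MIF, the additive formula $|\theta'(x)| = a + \sum_n 2\,\Im\,\lambda_n/|x-\lambda_n|^2$ for the derivative of the phase on $\R$ (your factor $2$ is the correct one; the paper drops it, harmlessly, since it cancels in the ratio), and a term-by-term comparison of the two positive series. The paper argues directly---some summand ratio $|y-\lambda_n|^2/|x-\lambda_n|^2$ must exceed $1+\e$, and such a $\lambda_n$ lies in an Apollonian disk---while you argue by contraposition and use the triangle inequality. These are the same estimate run in opposite directions, so your approach is equivalent to the paper's.

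The constant mismatch you flag is genuine, and it also affects the radius stated in the lemma. The Apollonian disk $\{\,|z-y|>\sqrt{1+\e}\,|z-x|\,\}$ extends to distance $|y-x|/(\sqrt{1+\e}-1)=(\sqrt{1+\e}+1)\,|y-x|/\e$ from $x$, which for $0<\e<1$ lies between $2|y-x|/\e$ and $(\sqrt{2}+1)|y-x|/\e$, and hence is \emph{not} contained in the ball of radius $|y-x|/\e$. Concretely, a single Blaschke factor with zero just above a large real point $x_0>0$, taking $x=0$ and $y=-\e x_0$, gives $|\theta'(x)|/|\theta'(y)|\to(1+\e)^2>1+\e$ while the only zero sits at distance about $x_0=|y-x|/\e$ from $x$, just outside the stated ball. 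So the conclusion should read that the zero lies in $\{|z-x|<|y-x|/(\sqrt{1+\e}-1)\}$, or more simply in $\{|z-x|<3|y-x|/\e\}$. Since the lemma is later invoked with a radius $C(t)/t$ where $C(t)\to\infty$, the absolute constant is immaterial to the rest of the paper; your instinct to state the bound as $c\,|y-x|/\e$ for an adjusted absolute constant $c$, rather than to quietly absorb the discrepancy, is exactly right.
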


\begin{proof}
As a MIF, $\theta$ can be represented as
$$\theta(z)=\a e^{icz}B_\L$$
for some unimodular $\a$, $c\geq 0$ and a Blaschke sequence $\L=\{\lan\}\subset \C_+$.
For a Blaschke factor
$$\b_{\lan}=\frac{\bar \l_n}{\lan}\frac{z-\lan}{z-\bar\l_n},\ \lan=x_n+iy_n,$$
the derivative
	of its argument is
$$\frac {2y_n}{(x-x_n)^2+y_n^2}.$$
For $|\theta'(x)|$, which equals to the derivative of the  argument of $\theta$ at $x$, we have	
	$$|\theta'(x)|= c+\sum_n  \frac {2y_n}{(x-x_n)^2+y_n^2}.$$
The sum on the right hand side is a sum of positive functions and for \eqref{eq2} to hold we need a similar
inequality to be satisfied by at least one of the summands, i.e.,
$$\frac {2y_n}{(x-x_n)^2+y_n^2}\cdot\frac {(y-x_n)^2+y_n^2}{2y_n}=\frac{|y-\lan|^2}{|x-\lan|^2}>1+\e
$$
for at least one $n$. The last inequality holds when $\lan$ is in (the interior of an Apollonian circle with foci $x$ and $y$, which is contained in) the ball from the statement. Indeed,
the previous inequality implies $|y-\lan|/|x-\lan|>1+\frac\e 3$
for $1>\e>0$ and 
$$|y-x|\geq |y-\lan| -|x-\lan|>\left(1+\frac\e 3\right)|x-\lan|-|x-\lan|=\e|x-\lan|/3.$$
\end{proof}

\section{Dirac inner functions}\label{secDIF}

In this section we introduce families of inner functions related to the systems \eqref{eqDS}. We call them
Dirac inner functions. In addition to playing a role in our arguments below, such families seem to present
independent interest and may prove useful in the studies of further properties of the system. Standard formulas
expressing Blaschke products in terms of their zeros establish, in this context, the relation between resonances
 (poles of the inner function) and spectra (level sets on $\R$) of the system.

If $H(z)$ is an Hermite-Biehler entire function then the function $$\theta_H(z)=H^\#(z)/H(z)$$ is a meromorphic inner function in $\C_+$.
Under the restriction that $H$ has bounded type, which is the case for functions related to Dirac systems with locally summable potentials,
$H$ can be uniquely, up to a real constant multiple, recovered from $\theta_H$. For more 
on the inner functions $\theta_H$  see
\cite{MIF1, MIF2, MP}.

Recall that
to each Dirac system we associate two families of Hermite-Biehler functions $E(t,z)$ and $\ti E(t,z)$ corresponding
to Neumann and Dirichlet initial conditions at $t=0$. We will denote the corresponding MIFs by
$$\theta=\theta_E\text{ and }\ti \theta=\theta_{\ti E}.$$
Similarly, families of MIFs can be considered for any self-adjoint initial condition.

Note that the functions $u(t,z)$ and $v(t,z)$ from the solution of the Dirac system
\eqref{eqDS} are real entire functions and therefore satisfy $\bar u(t,\bar z)=u(t,z)$
and $\bar v(t,\bar z)=v(t,z)$.
For $\theta=\theta(t,z)$ we have $E(t,z)=u(t,z)-iv(t,z)$ and
$$E^{\#}(t,z)=\bar E(t,\bar z)=\bar u(t,\bar z)+i\bar v(t,\bar z)=u(t,z)+iv(t,z).$$
Hence,
$$\theta(t,z)=\frac{E^\#(t,z)}{E(t,z)}=\frac{u(t, z)+iv(t, z)}{u(t,z)-iv(t,z)}$$
and
$$\frac \pa {\pa  t}\theta(t,z)
=\frac{(\frac \pa {\pa  t}u+i\frac \pa {\pa  t}v)(u-iv)-(u+iv)(\frac \pa {\pa  t}u-i\frac \pa {\pa  t}v)}{(u-iv)^2}=2i\frac{u\frac d{dt}v-v\frac \pa {\pa  t}u}{(u-iv)^2}=$$
$$2i\frac{zu^2+zv^2- 2fuv}{(u-iv)^2}=2iz\frac{u+iv}{u-iv}-4if\frac{uv}{(u-iv)^2}=
$$$$2iz\frac{u+iv}{u-iv}-f\frac{[(u+iv)-(u-iv)][(u+iv)+(u-iv)]}{(u-iv)^2},$$
which produces a Riccati equation for the family of Dirac inner functions corresponding to the system \eqref{eqDS}:
\begin{equation} \frac \pa {\pa  t}\theta=2iz\theta+f(1-\theta^2).\label{eqRiccati}\end{equation}
This equation together with its derivatives will be used in our study of the behavior of resonances of the system \eqref{eqDS}.

\begin{remark} The Riccati equation \eqref{eqRiccati} and corresponding families of functions  pose some interesting  questions.
In our settings  \eqref{eqRiccati} is considered with Neumann initial condition $\theta(0,z)=1$ for $\theta$ and
Dirichlet $\ti\theta(0,z)=-1$ for $\ti\theta$.
One can however consider other initial conditions. Conditions of the type
$\theta(0,z)=\phi(z)$, where $\phi$ is a bounded analytic function in $\C_+$, $|\phi|<1$, present a natural subclass.
One can show that with such initial conditions the solutions $\theta(t,z)$ will remain analytic in $z$ and satisfy $|\theta(t,z)|<1$ for $z\in\C_+$.
Interpreting the complex values as vectors in $\R^2$ one can see that at the real points where $|\theta(t,x)|=1$ the right-hand side of \eqref{eqRiccati} is orthogonal to $\theta(t,z)$, which implies that $\theta$ stays unimodular. Hence, with an inner initial condition
$\theta(t,z)$ is a family of inner functions. One can also show that $\theta(t,z)$ are MIFs if the initial condition is a MIF.

If $\theta(t,z)$ is a Neumann family of Dirac inner functions, each level set $\{z\ |\ \theta(t,z)=\a\},\ |\a|=1$  represents
the spectrum of the system \eqref{eqDS} restricted to the interval $(0,t)$ with the Neumann  condition at $0$ and
the condition $u(t)\sin\psi-v(t)\cos\psi=0$ at $t$, where $\a=e^{2i\psi}$. Analyzing \eqref{eqRiccati} one may follow the dynamics
of the spectra of the system \eqref{eqDS} as $t\to\infty$, see Remark \ref{remND} below.

\end{remark}


Let $\a(t)$ be a continuous curve in $\C$ such that $\theta(t,\alpha(t))=a$ for some constant $a\in \C$.
Then
$$\frac d{d t}\theta(t,\a(t))=0=\theta_t(t,\a(t))+\theta_z(t,\a(t))\a'(t),
$$
which implies
\begin{equation}\a'(t)=-\frac{2i\a(t) a + f(t)(1-a^2)}{\theta_z(t,\a(t))}\label{eqDynamics}
\end{equation}
(for those $t$ for which the denominator is not $0$).
We will be especially interested in the behavior of the zeros of $\theta$, whose complex conjugates represent
the resonances of the Dirac system \eqref{eqDS}. Let $z(t)$ be a curve in $\C_+$ such that $\theta(t,z(t))=0$ for all $t>0$.
Then \eqref{eqDynamics} becomes
\begin{equation}z'(t)=-\frac{f(t)}{\theta_z(t,z(t))}.
\label{eqDynRes}
\end{equation}

In our context the function $E$ is locally approximated by a sine (see Lemma \ref{sinus} below) and
therefore the zeros of $\theta$ are simple. In this case the derivative in the denominator is non-zero and  the application of \eqref{eqDynRes}
is straight-forward.

\begin{remark}\label{remND}
When $a=1$ the point $\a(t)$ belongs to $\sigma_{NN}(t)$, the spectrum of the restriction of the Dirac system
to the interval $(0,t)$ with Neumann conditions on both ends. The dynamics of an eigenvalue $N(t)$ from $\sigma_{NN}$ is
therefore given by
$$N'(t)=-\frac{2i N(t)}{\theta_z(t,N(t))}.$$
Similarly, for the eigenvalues of the Neumann-Dirichlet spectrum, $D(t)$,
we have $a=-1$ and \eqref{eqDynamics} produces the equation
$$D'(t)=\frac{2i D(t)}{\theta_z(t,D(t))}.$$
(Note that the derivative of any MIF on $\R$ is non-vanishing.)

Since for each fixed $t$, $\theta$ is unimodular on $\R$ and its argument is an increasing function, the $z$-derivative of $\theta$ is always
co-linear with $i\theta$. Since $$\theta(t,N(t))=1\textrm{ and }\theta(t,D(t))=-1,$$ the previous two equations imply that $N'(t)$ and $D'(t)$ are negative for the positive eigenvalues and positive
for the negative ones.
This simple observation  implies the known fact that the points of $\sigma_{NN}$ and
$\sigma_{ND}$ tend to zero monotonously as $t\to \infty$.

When $a$ in \eqref{eqDynamics} is unimodular but not equal to $\pm 1$, the point
$\a(t)$ is an eigenvalue for the Neumann condition at 0 and some other $\R^2$-condition at $t$, different from Neumann or Dirichlet. Notice that in this case
the function $f$ does not disappear from the numerator in \eqref{eqDynamics} and the moving eigenvalue changes direction when it
reaches the point $f(t)(1-a^2)/2i a$ (note that this number is real for real $f$ and unimodular $a$). This interesting dynamics of Dirac eigenvalues, viewed as level sets of Dirac inner functions, deserves a separate discussion which we hope to undertake elsewhere.
\end{remark}



\section{Scattering functions and NLFT}\label{NLFT}

Closely related  to the Hermite-Biehler functions $E(t,z)$ and $\ti E(t,z)$ corresponding to the Dirac system \eqref{eqDS} are the scattering
functions
$$\EE(t,z)=e^{itz}E(t,z)\textrm{ and }\ti\EE(t,z)=e^{itz}\ti E(t,z).$$
Without going into a discussion of the physical meaning of the scattering model, let us recall that $e^{-itz}$
is the Hermite-Biehler function of the free system and note that the functions $\EE(t,z)$ and $\ti\EE(t,z)$ represent the propagation of
a wave (signal) up to time $t$ in the system described by \eqref{eqDS}  and return of the same wave in the free system.

Note that scattering functions $\EE$ and $\ti\EE$ satisfy the equation
$$\frac \pa {\pa  t} \EE(t,z)=iz\EE(t,z)+e^{izt}\frac \pa {\pa  t} E(t,z)=$$
\begin{equation}=f(t)e^{2izt}\EE^\#(t,z).
\label{eqH}\end{equation}

Further, for each $t\geq 0$ define entire functions $a(t,z)$ and $b(t,z)$ as
\begin{equation}\begin{gathered}a(t,z)=\frac {\EE(t,z) +i\ti\EE(t,z)}2=\frac {e^{itz}}2 (E(t,z)+i\ti E(t,z)), \\  b(t,z)=
\frac {\EE(t,z) -i\ti\EE(t,z)}2=\frac {e^{itz}}2 (E(t,z)-i\ti E(t,z)).\label{eqab}\end{gathered}\end{equation}
(Note that our notations are slightly different from those in \cite{TT} where $a$ stands for $a^\#$ in our definitions.)

Using \eqref{eqH}, or \eqref{eqE} for $E$ and $\ti E$, one can show that the matrix
$$G(t,z)=\begin{pmatrix}a^\#(t,z) & b^\#(t,z) \\  b(t,z) &  a(t,z)
\end{pmatrix}
$$
satisfies the differential equation
\begin{equation}
G_t=\begin{pmatrix}0 & e^{-2izt}f(t) \\ e^{2izt}f(t) & 0
\end{pmatrix}G
\label{eqG}\end{equation}
with the initial condition $G(0,z)=I$. One can deduce from the IVP that $\det G\equiv 1$ for all $t$ and $z$, which also follows
from \eqref{eqab} and \eqref{det=1}. 
Since $\det G=|a|^2-|b|^2=1$ on $\R$, $|a|>|b|$ on $\R$.
Moreover, it is well known and not difficult to show that $a$ is outer in $\C_+$.
Since $\det G=|a|^2-|b|^2=1$ on $\R$, $|b/a|<1$ on $\R$. Hence $b/a$ is a bounded analytic function in $\C_+$,
$|b/a|<1$.
As was noticed in \cite{TT}, since $|a|^2=|b|^2+1$,  $a(t,0)>0$ and $a$ is outer in $\C_+$, $a,b$ and $G$ can all be
uniquely recovered from $b/a$.

The following formulas implied by \eqref{eqG} and the initial condition $G(0,z)=I$ will be useful to us down the line:

\begin{equation*}
	b^\#(t,z)=\int_{0}^{t}f(s)e^{-2izs}a(s,z)ds,
\end{equation*}
and

	$$a(t,z)=1+\int_{0}^{t}f(s)e^{2izs}b^\#(s,z)ds= $$
\begin{equation}	
	=1+\int_{0}^{t}f(s)e^{2izs}\left[ \int_{0}^{s}f(r)e^{-2izr}a(r,z)dr\right]ds.\label{eqA}
\end{equation}

It is well known that under the restriction $f\in L^2(\R_+)$ the scattering matrix
$$\overbrace{f}=G(\infty, z)=\lim_{t\to\infty} G(t,z)$$
exists, at least in some sense. In the discrete case discussed in \cite{TT} the convergence is proved with respect to a metric on the unit circle (which replaces
the line in the discrete situation), see Lecture 2. Using the same methods one can show convergence in measure for $\log |a(t,\cdot)|$ with respect
to Lebesgue measure on $\R$.
Normal convergence in the upper half-plane is established in \cite{Den} in the equivalent settings of Krein systems in Chapter 12; for the scattering functions $\EE$ and $\ti\EE$ corresponding to Dirac systems it follows from the relations established in Chapter 14 \cite{Den}. Normal convergence in $\C_+$ for $a$ and $b$ then follows from \eqref{eqab}.

Note that since $\log |a(t,\cdot)|$ is a non-negative function on $\R$, convergence of its
Poisson integral at $z=i$ is equivalent to the convergence of the norms $||\log |a(t,\cdot)|\ ||_{L^1(\Pi)}$.
Together with convergence in measure this implies convergence of $\log |a(t,\cdot)|$ in $L^1(\Pi)$.
Since $|b|^2=|a|^2-1$, $|b|$ converges in measure with respect to $\Pi$.
 For $\log_+ |b|=\max(\log |b|,0)$,
using again the relation $|b|^2=|a|^2-1$, we obtain convergence in $L^1(\Pi)$.
Note that for a family of functions from Smirnov class pointwise convergence in $\C_+$  to a non-zero function from Smirnov class, under a restriction
that the outer part is positive at a fixed point, implies pointwise
convergence in $\C_+$ for their outer and inner parts.
Convergence of the outer parts of $b$ at $i$ together with convergence of $\log_+ |b|$ in $L^1(\Pi)$
implies convergence of the $L^1(\Pi)$-norms of $\log_- |b|=\log |b|-\log_+ |b|$. Together with convergence of $|b|$ in measure, we get convergence of
 of $\log |b|$ in $L^1(\Pi)$. For the inner components of $b$,
one can show that pointwise convergence in $\C_+$ implies convergence in measure on $\R$ with respect to $\Pi$. All in all, we obtain that $a$ and  $b$, and therefore $\EE$ and $\ti \EE$,
converge in measure on $\R$ with respect to $\Pi$.

All of the functions $a(t,z)$, including the limit function $a=a(\infty, z)$ for $t=\infty$, satisfy the non-linear version of Parseval's identity
\begin{equation}||\log |a(t,\cdot)|\ ||_{L^1(\R)}=||f||_{L^2(0,t)}^2,\label{eqParseval}\end{equation}
which was known in various forms for many decades, see \cite{Den, MTT, TT} for proofs and further references.

In this paper we pay special attention to the function $b(t,z)/a(t,z)$ and its limit at infinity. Let us denote
$$\nlhat f(z)=\frac {b(\infty,z)}{a(\infty,z)}.$$
If $f_T$ is the restriction of the potential function $f$ to the interval $(0,T)$ (extended by $0$ outside of the interval) then
$$\nlhat f_T(z)=\frac {b(T,z)}{a(T,z)}.$$
There is abundant evidence  that various versions of the scattering transform, including
 $\overbrace{f}$ and $\nlhat f$, can be viewed as  non-linear analogs of the Fourier transform, see for instance
\cite{MTT, S, T, TT} for a discussion and further references. The transform $f\mapsto \nlhat f$ we are about to study
shares the modulation/shift property and the rescaling property with its linear predecessor. Parseval's identity in
terms of $\nlhat f$ takes the form
$$\frac 12||\log (1-|\nlhat f|^2)\ ||_{L^1(\R)}=||f||_{L^2(\R)}^2.$$
The analogy extends further by the property that the transform of a function $f$ supported on a half-line produces
a function $\nlhat f$ holomorphic in the upper half-plane. As was mentioned before, the pair of functions $(a,b)$,
and therefore the matrix $G$, can be uniquely recovered from $\nlhat f$ since $a$ is an outer function
in $\C_+$ which has absolute value $$|a|=\frac1{\sqrt{1-|\nlhat f|^2}}$$ on $\R$ and is positive at $0$, and $b=a\nlhat f$.

%
%
%
%
%
%
%

In this paper we prove the following analog of Carleson's theorem:

\begin{theorem}\label{main} For every real $f\in L^2(\R_+)$,
$$\overset{\ \curlywedge}f_T(s)\to \overset{\ \curlywedge}f(s)\textrm{ as }T\to\infty$$
 for a.e. $s\in\R$, where $f_T$ denotes the restriction of  $f$ to the interval $(0,T)$.
\end{theorem}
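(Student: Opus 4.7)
The plan is to reduce pointwise convergence of $\nlhat f_T(s)=b(T,s)/a(T,s)$ to a statement about absence of resonances of the restricted Dirac system near $s$ for almost every $s\in\R$. Since $a$ is outer, $|a|^2=|b|^2+1$, and $a,b$ are determined by the pair $(E(T,\cdot),\ti E(T,\cdot))$ via \eqref{eqab}, it suffices to prove convergence on $\R$ of the moduli $|E(T,s)|, |\ti E(T,s)|$ and of the relative phase $\arg(E(T,s)/\ti E(T,s))$.

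First I would establish a universality-type result in Section \ref{secUR}: at every Lebesgue point of the Szeg\"o weight $w$ (which, by Denisov's theorem recalled in Section \ref{secHB}, covers a.e.~$s\in\R$), the reproducing kernels $K(T,\l,z)$ of $B(E(T,\cdot))$ locally resemble, after appropriate rescaling, the sinc kernel \eqref{eqsinc}. By \eqref{eq1700}, kernel asymptotics give control of $|E(T,s)|$, but do not pin down $E(T,\cdot)$ itself as a function of $z$. To lift the kernel approximation to an approximation of $E$ (and $\ti E$) by genuine sines and cosines, as needed in Sections \ref{secE}--\ref{secJA}, I would condition on the presence of a nearby zero of $\theta(T,z)=E^\#/E$: such a zero forces $E(T,z)$, up to normalization, to look like $\sin[T(z-s)]$ locally, and then the relation \eqref{eqDet2i} forces $\ti E$ to behave like the matching cosine.

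The heart of the argument is the following dichotomy. For a fixed $s\in\R$, suppose a zero $z(t)$ of $\theta(t,\cdot)$ (equivalently a resonance of the truncated system on $(0,t)$) stays within $\asymp 1/t$ of $s$ throughout a time window $I$. Using the Riccati equation \eqref{eqRiccati} together with \eqref{eqDynRes} and \eqref{eqArg}, and using Lemma \ref{lemInner} to translate growth of $|\theta'(t,s)|$ into the existence of such a nearby zero, I would classify $I$ into two types according to whether the trajectory $t\mapsto z(t)$ moves "almost vertically" (change in $\Im z(t)$ dominates) or has significant "horizontal" component. In the vertical case (Section \ref{secVI}), \eqref{eqDynRes} shows that motion of $\Im z(t)$ is driven by $f(t)/\theta_z(t,z(t))$; combined with the sinc approximation of $E$, which makes $|\theta_z(t,s)|\asymp t$, this yields a lower bound of the form $\int_I f^2\,dt\gtrsim \Im z(t_0)\cdot t_0$ over such intervals. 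In the horizontal case (Section \ref{secHI}), the real part of $z(t)$ can only be moved by $f$, and each full swing across a window of size $1/t$ costs a definite amount of $L^2$-mass of $f$, yielding an analogous lower bound.

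Summing these lower bounds over disjoint windows in which a resonance of the restricted system stays within $\asymp 1/t$ of $s$ would, for $s$ in a set of positive measure, exceed $\|f\|_{L^2(\R_+)}^2$ — a contradiction. Hence for a.e. $s$ there exists $T_0(s)$ such that no resonance of the system on $(0,t)$ lies within distance $1/t$ of $s$ for $t\geq T_0(s)$. In Section \ref{secPMT} I would then use this resonance-free conclusion together with the evolution equation \eqref{eqH}, integrated against the approximated $E$ and $\ti E$, to deduce convergence of $|a(T,s)|$, $|b(T,s)|$, and $\arg(b/a)(T,s)$, and therefore of $\nlhat f_T(s)$. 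The main obstacle is the horizontal case of Step 3: the linear analogue has no counterpart here because the sign of $f$ determines the direction of horizontal drift, so the $L^2$-cost bound cannot be read off from Parseval \eqref{eqParseval} alone and requires a genuinely nonlinear argument tied to the geometry of the inner functions $\theta(t,\cdot)$.
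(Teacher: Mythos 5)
Your overall architecture matches the paper: universality of reproducing kernels, lifting to sine/cosine approximations of $E,\ti E$ conditional on a nearby resonance, the vertical/horizontal dichotomy for resonance motion governed by the Riccati equation, and the reduction of Theorem~\ref{main} to resonance avoidance. However, the quantitative heart of the argument — the $L^2$-lower bounds in Sections~\ref{secVI}--\ref{secHI} — is not correct as you stated it, and this is where the real work lies.

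In the vertical case you claim a \emph{per-interval, per-$s$} bound $\int_I f^2\,dt\gtrsim \Im z(t_0)\cdot t_0$. This does not follow from \eqref{eqDynRes}. The dynamics $|z'(t)|\asymp |f(t)|/t$ together with the requirement that the resonance traverse $\asymp C/t$ only yields $\int_I|f|\gtrsim C$, and Cauchy–Schwarz then gives $\int_I f^2 \gtrsim C^2/|I|$, which for windows at time scale $2^n$ and length $\lesssim 2^n$ tends to zero. In other words, a single $s$ cannot generate a useful $L^2$ lower bound from the dynamics alone. What the paper actually does in Section~\ref{secVI} is an \emph{ensemble} argument over the set $S^n_V$: on a vertical interval, Lemma~\ref{lemAlpha} forces $\alpha(s,t)\approx e^{ist}$ and the verticality forces $\alpha^2(s,t)$ to be essentially real, which together produce $\bigl|\int_{\TT^m}e^{4its}f_+\,dt\bigr|\gtrsim \Delta$ for every $s\in\SS^m$; Plancherel over the frequency variable $s$ then gives $\|f_+\|^2_{L^2(\TT^m)}\gtrsim|\SS^m|$, which aggregates to Claim~\ref{SV}. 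That linear Fourier input is what makes the vertical case work, and it is absent from your proposal.

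Your description of the horizontal case is also off in an instructive way. You say the $L^2$-cost ``cannot be read off from Parseval \eqref{eqParseval} alone,'' but the paper's Section~\ref{secHI} argument is built precisely on \eqref{eqParseval}: from the sine/cosine approximations one shows that $|\arg a_{t_1\to t_2}|$ is $\gtrsim \e_1$ on a set of size $\gtrsim 1/t_2$ near $s$; since $\arg a_{t_1\to t_2}$ is the Hilbert transform of $\log|a_{t_1\to t_2}|$ (using $a(0)=1$ and the parity from real $f$), the weak-$(1,1)$ estimate gives $\|\log|a_{t_1\to t_2}|\|_{L^1(\R)}\gtrsim |I_k|\int_{\TT_l}|f|$, and then Parseval converts this into $\|f\|^2_{L^2(t_1,t_2)}$, summing over a near-disjoint family of $(s,\TT_l)$ to obtain Claim~\ref{SH}. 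So the nonlinear Parseval identity \emph{is} the mechanism in the horizontal case, and in the vertical case it is \emph{linear} Plancherel; you have the roles reversed and, more importantly, the per-interval ``cost of a swing'' bound you assert is not established and would not close the argument without the ensemble structure.

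A secondary gap: you say that after obtaining the resonance-free conclusion one ``integrates \eqref{eqH} against the approximated $E$ and $\ti E$'' to deduce convergence of $a,b,b/a$. The paper instead uses part 2) of Corollary~\ref{cor02} directly: absence of zeros in $Q(s,C(t)/t)$ gives the exponential approximation \eqref{eqE=sin1}, the determinant relation \eqref{eqDet2i} fixes the relative phase $\beta$, and \eqref{eqab} then yields convergence of $|a(t,s)|$, $|b(t,s)|$ and $b/a$ to explicit limits in $w(s),\ti w(s)$. No further integration of \eqref{eqH} is needed; convergence of $\nlhat f$ (rather than just of $\overbrace{f}$) needs convergence of the inner/outer parts, which is handled in Section~\ref{NLFT} by the Smirnov-class argument.
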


We actually prove a slightly stronger statement that for a.e. $s\in \R$ and any $C>0$,
$$\sup_{|z-s|<C/T}|\nlhat f_T(z)-\nlhat f(s)|\to 0\textrm{ as }T\to\infty,$$
see Section \ref{secPMT}.
Such convergence is established for $|E|,\ |\ti E|$ and $\log |a|$.

\section{Universality-type results}\label{secUR}

In this section we show that if the spectral measure of a chain of de Branges spaces satisfies
the Szeg\"o condition, then near almost every point on the real line the scaling limits of the reproducing kernels
are equal to the sinc functions, the reproducing kernels of the Paley-Wiener space. Similar problems, motivated by universality results in random matrix theory, were previously studied
by Lubinski \cite{L} for spaces of polynomials and by Mitkovski \cite{M} for de Branges spaces.
In particular, an analog of Lemma \ref{supK-S} below was proved in \cite{M} for points $s$ of continuity
of density of a regular spectral measure.

We keep our notations of $\mu$ and $\ti\mu$ for the spectral measures of the Dirac system \eqref{eqDS}
with Neumann and Dirichlet initial conditions correspondingly; $w$ and $\ti w$ denote the densities
of the absolutely continuous parts of the measures:
$$d\mu=wdx+d\mu_{s},\ d\ti\mu=\ti wdx+d\ti\mu_{s}.$$
By $K(t,\l, z)$ we denote the reproducing kernel of the space $B(E(t,z))$
for the point $\l$. All our statements can be similarly proved for the reproducing kernels $\ti K(t,\l, z)$
of $\ti B(E(t,z))$.

We use the notations $||\cdot ||_\mu$ and $||\cdot ||_2$ for the norms
in $L^2(\mu)$ and in $L^2(\R)$. For an absolutely continuous measure $\mu,\ d\mu=w(x)dx$, we use $||\cdot||_w$ in
place of $||\cdot||_\mu$.

For a Poisson-finite measure $\mu$ on $\R$ we denote by $P\mu$ its Poisson extension to the upper half-plane
$$P\mu(x+iy)=\frac 1\pi\int\frac{yd\mu(t)}{(x-t)^2+y^2}\ .$$
For $x\in\R$ let $\Gamma(x)$ be the non-tangential sector
$$\G(x)=\{ z\in\C_+\ |\ |\Re\  (z-x)|<\Im\  z <1  \}.
$$
For any function $\phi(z)$ in $\C_+$ we denote by $M\phi$ its non-tangential maximal function on $\R$:
$$M\phi(x)=\sup_{z\in \G(x)} |\phi(z)|.$$
Thus $MP\mu$ will stand for the maximal function of the Poisson extension of $\mu$.

For $s\in\R$ and $C>0$ we will denote by $Q(s,C)$ the square box centered at $s$:
$$Q(s,C)=\{ |\Re\ (z-s)|\leq C, \ |\Im\  z|\leq C     \}.$$
The proximity of reproducing kernels $K(t,z, \cdot)$ to sinc functions $\S(t,z, \cdot)$, defined in \eqref{eqsinc}, will be studied on
boxes $Q(s,C/t)$ whose size decreases with time.

We start with the following statement.

\begin{lemma}\label{lem1200}  For almost all $s\in \R$ and any $C>0$,

\begin{equation} \sup_{z\in Q(s,C/t)}\left(\frac{w(s)||K(t,z, \cdot)||^2_{\mu}}{||\S(t,z, \cdot)||^2_2}-1\right)= o(1),\label{eq1001}\end{equation}

 as $t\to\infty$.
\end{lemma}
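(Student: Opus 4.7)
The plan is to reduce \eqref{eq1001} to a pointwise upper bound on the reproducing kernel via a weighted Cauchy--Schwarz inequality, and then prove the resulting Lebesgue-differentiation-type statement. Since $\|K(t,z,\cdot)\|_\mu^2=K(t,z,z)$ and $\|\S(t,z,\cdot)\|_2^2=\S(t,z,z)$ by the reproducing property, the claim is equivalent to
\begin{equation*}
 w(s)\,K(t,z,z) \le \S(t,z,z)(1+o(1))\quad\text{uniformly in } z\in Q(s,C/t).
\end{equation*}

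First I would exploit the fact, established earlier in the paper, that $B(E(t,\cdot))=PW_t$ as sets. Consequently every $f\in B(E(t,\cdot))$ satisfies the standard sinc reproducing formula $f(z)=\langle f,\S(t,z,\cdot)\rangle_{L^2(\R)}$. Writing $f\,\overline{\S}=(f\sqrt{w})\cdot(\overline{\S}/\sqrt{w})$ and applying Cauchy--Schwarz yields $|f(z)|^2\le \|f\|_\mu^2\int_\R |\S(t,z,x)|^2/w(x)\,dx$, and supremizing over $\|f\|_\mu\le 1$ gives the pointwise upper bound
\begin{equation*}
  K(t,z,z) \;\le\; \int_\R\frac{|\S(t,z,x)|^2}{w(x)}\,dx.
\end{equation*}

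The core analytic step is then to establish, for a.e.\ $s\in\R$ with $w(s)>0$ and every $C>0$,
\begin{equation*}
  \sup_{z\in Q(s,C/t)} \frac{1}{\S(t,z,z)}\int_\R\frac{|\S(t,z,x)|^2}{w(x)}\,dx \;\le\; \frac{1+o(1)}{w(s)}\quad(t\to\infty).
\end{equation*}
The normalized densities $|\S(t,z,\cdot)|^2/\S(t,z,z)$ form an approximate identity of bandwidth $\asymp 1/t$ concentrated near $\Re z$, pointwise dominated by a fixed multiple of $\min\{t,\,1/(t(x-\Re z)^2)\}$; hence the required estimate is a Lebesgue-type differentiation of $1/w$ against this sinc-square approximate identity.

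The hard part will be that the Szeg\"o condition $\log w\in L^1(\Pi)$ does not force $1/w$ to be locally integrable, so the classical Lebesgue differentiation theorem cannot be invoked directly. I would handle this by a truncation argument: set $w_N=\max(w,1/N)$. For each fixed $N$ the function $1/w_N$ is bounded, and its sinc-square averages converge to $1/w_N(s)$ uniformly over $z\in Q(s,C/t)$ at almost every $s$, via the Hardy--Littlewood maximal-function bound for the dominating kernel. The residual error is supported on $\{w<1/N\}$ and is controlled at a.e.\ $s$ with $w(s)>0$ by combining the vanishing Lebesgue density of $\{w<1/N\}$ at such $s$ with the kernel decay. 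Letting $N\to\infty$ through a countable sequence and discarding a null set at each stage produces a full-measure set of admissible $s$, and combining with the preceding Cauchy--Schwarz bound yields the lemma.
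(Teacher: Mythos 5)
Your Cauchy--Schwarz reduction is correct as far as it goes: indeed
$K(t,z,z)=\sup_{f\in PW_t,\,\|f\|_\mu\le 1}|f(z)|^2$, and writing
$f\overline{\S}=(f\sqrt w)(\overline\S/\sqrt w)$ gives
$K(t,z,z)\le \int_\R |\S(t,z,x)|^2\,w(x)^{-1}\,dx$. The problem is the next step, which is where you yourself locate the ``hard part.'' The Szeg\H{o} condition $\log w\in L^1(\Pi)$ is compatible with $w>0$ a.e.\ and yet $1/w\notin L^1$ on \emph{every} interval. For instance, let $E_n\subset[0,1]$ be a union of $2^n$ equally spaced intervals of total length $4^{-n}$, put $w=4^{-n^2}$ on $E_n$ and $w=1$ off $\bigcup E_n$ (with $n$ taken maximal where intervals overlap). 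Then $\int|\log w|\lesssim\sum n^2 4^{-n}<\infty$, almost every point lies in only finitely many $E_n$ so $w>0$ a.e., but for every $s$ and every $r>0$ one has $\int_{|x-s|<r}w^{-1}\ge \sum_{n\ge n_0(r)} c\, r\, 4^{-n}4^{n^2}=\infty$. For such a weight the right-hand side of your Cauchy--Schwarz bound is $+\infty$ for every $z$ and $t$, while $K(t,z,z)$ is always finite; the inequality becomes vacuous and cannot yield $(1+o(1))\S(t,z,z)/w(s)$.

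Your truncation does not repair this. After replacing $1/w$ by $1/w_N=\min(1/w,N)$ inside the integral, the residual is
$\int_{\{w<1/N\}}\frac{|\S(t,z,x)|^2}{\S(t,z,z)}\bigl(w(x)^{-1}-N\bigr)\,dx$,
and $w^{-1}-N$ is unbounded on $\{w<1/N\}$; ``vanishing Lebesgue density of $\{w<1/N\}$ at $s$'' controls the measure of this set inside small balls, not the blowup of $w^{-1}$ there. In the above example the residual is already $+\infty$ for every $N$ and every $s$. (A variant in which you instead keep $w_N$ inside the Cauchy--Schwarz, i.e.\ use $(f\sqrt{w_N})(\overline\S/\sqrt{w_N})$, trades the residual for a term $\frac{1}{N}\|f\|_{L^2(\R)}^2$; but the equivalence constant between $\|f\|_{L^2(\R)}$ and $\|f\|_\mu$ on $PW_t$ grows like $e^{\int_0^t|f|}$ with $t$, so this term cannot be absorbed uniformly either.) So the reduction to a differentiation theorem for $1/w$ is the wrong route.

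The paper sidesteps this entirely: instead of pointwise bounds involving $1/w$, it proves the upper inequality by a compactness/contradiction argument. It passes to the outer function $G$ with $|G|^2=w$, rescales $G_t(z)=G(s+(z-s)/t)$, observes that $G_t\to G(s)$ normally in $\C_+$ at a.e.\ (Lebesgue) $s$, extracts weak $H^2$-limits of $e^{iz}f_nG_{k_n}$ and $e^{-iz}f_n$ in the two half-planes, patches them via a Phragm\'en--Lindel\"of-type dominated-convergence argument for the Cauchy integrals across a square chosen using the local summability of $\sqrt{MP\log w}$, and obtains a Paley--Wiener extremizer that contradicts the definition of $D$. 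None of this requires any integrability of $1/w$; the control is entirely through $\log w$. Finally, note that your proposal addresses only the upper inequality $w(s)K\le(1+o(1))\S$; the paper's proof also establishes the matching lower bound by testing $K(t,z_t,z_t)\ge |\S(t,z_t,z_t)|^2/\|\S(t,z_t,\cdot)\|_\mu^2$ and using that $|\S|^2/\|\S\|_2^2$ is an approximate identity against $d\mu$, which is used downstream in Lemma~\ref{lemL2}.
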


\begin{proof}
Recall that
$$||K(t,z, \cdot)||_{\mu}=\sup_{f\in PW_t,\ ||f||_\mu\leq 1} |f(z)| $$
and
$$||\S(t,z, \cdot)||_2=\sup_{f\in PW_t,\ ||f||_2\leq 1} |f(z)|.$$

The relation we need to establish therefore becomes
\begin{equation}\sqrt{w(s)}\sup_{f\in PW_t,\ ||f||_\mu\leq 1} |f(z)|=(1+o(1))\sup_{f\in PW_t,\ ||f||_2\leq 1} |f(z)|.
\label{1900}\end{equation}
Let us first prove that the left hand side of \eqref{1900} is no greater than the right hand side for some choice of
$o(1)$.

Let $G$ be an outer function in $\C_+$ with $|G|^2=w$. Suppose that the non-tangential limit
$G(s)$ exists at  $s$ and $|G(s)|=\sqrt{w(s)}$. Multiplying $G$ by a unimodular constant, we can choose $G$ so that $G(s)=\sqrt{w(s)}$.

Due to the weak-$(1,1)$ type of the non-tangential maximal operator, the function $R=\sqrt{MP\log  w}$
is locally summable and we assume that $s$ is its Lebesgue point.

In this part we will assume that  $G(s)=\sqrt{w(s)}=1$ (otherwise, since $w(s)\neq0$ for a.e. $s$, we can divide $w$ by $w(s)$).
Notice that since $||f||_\mu\geq ||f||_w$,
$$\sup_{f\in PW_t,\ ||f||_\mu\leq 1} |f(z)|\leq\sup_{f\in PW_t,\ ||f||_w\leq 1} |f(z)|,$$
and it is enough to show that the last supremum is less or equal to the right hand side of \eqref{1900}.


 Put $$G_t= G\left(s+\frac {z-s}t\right).$$
 For each $t>0$ choose a point $z_t\in Q(s,C)$. Then
 $$s+\frac{z_t-s}t\in Q(s,C/t).$$
  Using that $||f||_w=||fG||_2$ and rescaling, we obtain
$$\sup_{f\in PW_t,\ ||f||_w\leq 1} \left|f\left(s+\frac {z-s}t\right)\right|=\sqrt{t}\sup_{f\in PW_1,\ ||fG_t||_2\leq 1} |f(z_t)|.$$
The inequality we need to establish for every choice of $z_t\in Q(s,C)$ becomes
\begin{equation}\sup_{f\in PW_1,\ ||fG_t||_2\leq 1} |f(z_t)|\leq (1+o(1))D_t,\label{1011}\end{equation}
where
$$D_t=
\sup_{f\in PW_1,\ ||f||_2\leq 1} |f(z_t)|.$$

Suppose that $f_n$ is a sequence of functions from $PW_1$ such that 
\begin{equation}||f_nG_{k_n}||_2\leq 1\label{eq2022001}\end{equation}
for some $k_n\to\infty$ but
$$f_n(z_{k_n})>D_{k_n}+\e.$$
Notice that all the points $z_{k_n}$ belong to $Q(s,C)$ and therefore, by choosing
a subsequence if necessary, we can assume that $z_{k_n}\to z_0\in Q(s,C)$. Let
$$D=\sup_{f\in PW_1,\ ||f||_2\leq 1} |f(z_0)|.$$ Then $D_{k_n}\to D$.

Let $g_n=e^{iz}f_nG_{k_n}$. 
Recall that $f_n\in PW_1$ and therefore $e^{iz}f_n\in H^2(\C_+)$. Since $G_n$ is outer and $f_nG_n$ satisfy  \eqref{eq2022001},
all $g_n$ are $H^2(\C_+)$-functions of norm at most 1.
By choosing a subsequence if necessary, we can assume that $g_n$ converge to some $g\in H^2$ weakly in $H^2$ (and therefore pointwise in $\C_+$).

Then
$||g||_2\leq 1$. Notice that since $G(z)\to 1$ as $z\To s$, $G_t\to 1$ as $t\to \infty$
normally in $\C_+$. Therefore, the sequence $e^{iz}f_n$ converges to $g$ normally in $\C_+$.

Similarly, by choosing a subsequence if necessary, we can assume that  $e^{-iz}f_n$ converges normally in $\C_-$ to some analytic function $g_-\in H^2(\C_-)$.

Recall that $s$ is a Lebesgue point of $\sqrt{MP\log w}$. Therefore, for an arbitrary large constant $L$ and every $n$ we can choose
 $c_n,\ L<c_n<2L$, such that $P\log w$ is uniformly bounded  on the union of the segments $x-s=\pm c_n/k_n,\ |y|<1$.
Let us consider a square $R_n$ whose sides lie on the lines $\Im\  z=\pm c_n$ and $\Re\  (z-s)=\pm c_n$.
On the vertical sides of $R_n$ in $\C_+$, $$|f_nG_{k_n}|\leq C/\sqrt {|y|},$$ because $$||e^{iz}f_nG_{k_n}||_{H^2}\leq 1$$ and, by the choice of $c_n$, $|G_{k_n}|>\delta>0$ for large enough $n$.
Hence, on the vertical sides of $R_n$ in $\C_+$, and similarly in $\C_-$,  $|f_n(s\pm c_n +i y)|\leq C/\delta \sqrt {|y|}$.
From normal convergence of $f_n$ in $\C_\pm$ and these estimates, we obtain dominated convergence for Cauchy integrals for points
inside $$R=\{|\Im\  z|<L/2,|\Re\  (z-s)|<L/2\}$$ and conclude that $f_n$ converges uniformly on any compact inside $R$. Since $L$ can be arbitrarily large,
it follows that $f_n$ converge normally in $\C$.

Since a
normal limit of a sequence of entire functions is entire, the function $H$ defined as $e^{-iz}g$ in $\C_+$ and as $e^{iz}g_-$ in $\C_-$ extends to an entire function.
The property that $g$ and $g_-$ belong to $H^2(\C_\pm)$ implies that $H\in PW_1$.
From normal convergence of $f_n$ to $H$ it follows that
$$|H(z_0)|=\lim |f_n(z_{k_n})|\geq \lim D_{k_n}+\e=D+\e.$$
Since $||H||_2=||g||_2\leq 1$
we obtain a contradiction.

To prove that the left hand side of \eqref{1900} is no less than the right hand side, let now $z_t$ be a point
in $Q(s,C/t)$ for each $t$. Notice that $$s_t(x)=|\S(t,z_t, x)|^2/||\S(t,z_t, \cdot)||^2_2$$ is an approximative unity at the point $s$ and therefore
$$\left|\int s_t(x) d\mu(x) -w(s)\right| =o(1)\text{ as } t\to\infty$$
for a.e. $s$. Since $\S(t,z_t,\cdot)\in PW_t$,
$$||K(t,z_t, \cdot)||^2_{\mu}=K(t,z_t,z_t)\geq $$$$
\geq[\S(t,z_t,z_t)/||\S(t,z_t,\cdot)||_\mu]^2=$$$$
=[\S(t,z_t,z_t)]^2/\left(||\S(t,z_t, \cdot)||^2_2\int s_t d\mu\right)  \geq $$$$
\geq [\S(t,z_t,z_t)]^2/\left(||\S(t,z_t, \cdot)||^2_2(w(s)+o(1))\right)=$$$$
=||\S(t,z_t, \cdot)||^2_2/(w(s)+o(1)).
$$

\end{proof}



%
%
%
%
%
%

From the asymptotic proximity of norms we can now pass to the proximity of functions themselves.

\begin{lemma}\label{lemL2}
For a.e. $s\in \R$ and any  $C>0$

\begin{equation} \sup_{z\in Q(s,C/t)}\left|\left|K(t,z, \cdot)-\frac{1}{w(s)}\S(t,z, \cdot)\right|\right|^2_{\mu}=o(t)\label{eq1002}
\end{equation}
as $t\to\infty$.

\end{lemma}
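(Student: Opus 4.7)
The plan is a direct algebraic expansion: write $K_z := K(t,z,\cdot)$ and $\S_z := \S(t,z,\cdot)$, so that
$$
\Bigl\|K_z - \tfrac{1}{w(s)}\S_z\Bigr\|_\mu^2 \;=\; \|K_z\|_\mu^2 \;-\; \tfrac{2}{w(s)}\Re\langle K_z,\S_z\rangle_\mu \;+\; \tfrac{1}{w(s)^2}\|\S_z\|_\mu^2.
$$
The cross term collapses by the reproducing property: since $PW_t = B(E(t,\cdot))$ as sets, $\S_z$ lies in $B(E(t,\cdot))$, and hence
$$
\S(t,z,z) \;=\; \langle \S_z, K_z\rangle_\mu, \qquad \langle K_z,\S_z\rangle_\mu \;=\; \overline{\S(t,z,z)}.
$$
A direct look at \eqref{eqsinc} shows that $\S(t,z,z) = \sinh(2t\,\Im z)/(2\pi\,\Im z)$ is real and positive for every $z\in\C$ and coincides with $\|\S_z\|_2^2$, the $L^2(\R)$ reproducing-kernel value at the diagonal. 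Hence
$$
\Bigl\|K_z - \tfrac{1}{w(s)}\S_z\Bigr\|_\mu^2 \;=\; \|K_z\|_\mu^2 \;-\; \tfrac{2}{w(s)}\|\S_z\|_2^2 \;+\; \tfrac{1}{w(s)^2}\|\S_z\|_\mu^2.
$$

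For the two remaining terms I would invoke what has already been built up. Lemma \ref{lem1200}, in both the upper and lower directions proved inside its proof, gives
$$
\|K_z\|_\mu^2 \;=\; \tfrac{1}{w(s)}\|\S_z\|_2^2 \;+\; o(t)
$$
uniformly for $z\in Q(s,C/t)$, using the elementary bound $\|\S_z\|_2^2 = O(t)$ on the box. For the third term I would reuse the approximate-identity estimate established at the very end of the proof of Lemma \ref{lem1200}: at a.e.\ $s\in\R$, the normalized density $|\S(t,z,\cdot)|^2/\|\S_z\|_2^2$ acts as an approximate unit mass at $s$ against $d\mu$, producing total mass $w(s)+o(1)$ uniformly for $z\in Q(s,C/t)$. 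This immediately yields
$$
\|\S_z\|_\mu^2 \;=\; w(s)\,\|\S_z\|_2^2 \;+\; o(t).
$$
Plugging both estimates into the expansion above gives
$$
\Bigl\|K_z - \tfrac{1}{w(s)}\S_z\Bigr\|_\mu^2 \;=\; \tfrac{\|\S_z\|_2^2}{w(s)} - \tfrac{2\|\S_z\|_2^2}{w(s)} + \tfrac{\|\S_z\|_2^2}{w(s)} + o(t) \;=\; o(t),
$$
uniformly in $z\in Q(s,C/t)$, which is the conclusion.

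The only point that requires genuine care -- and the step I would be careful to check rather than waving through -- is the uniformity of the approximate-identity estimate over the shrinking box. This is handled by the rescaling $(z,x)\mapsto (s+(z-s)/t,\; s+(x-s)/t)$ already implicit in the proof of Lemma \ref{lem1200}: the kernel $|\S(t,z,x)|^2$ concentrates at $s$ on scale $1/t$, and $z$ itself varies on the same scale, so after rescaling the problem becomes that of integrating a fixed Fej\'er-type kernel (with complex-shifted argument, but uniformly bounded away from its singularity and decaying like $1/u^2$ at infinity) against a rescaling of $\mu$. The a.e.-Lebesgue-point property of $w$ then delivers the uniform $o(1)$. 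Beyond this, the proof is a short piece of bookkeeping on top of Lemma \ref{lem1200}; no new analytic ingredient is needed.
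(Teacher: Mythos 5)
Your proposal is correct and follows essentially the same route as the paper's own proof: expand the $L^2(\mu)$ norm of the difference, collapse the cross term $\langle K_z,\S_z\rangle_\mu=\overline{\S(t,z,z)}=\|\S_z\|_2^2$ via the reproducing property (using that $\S_z\in PW_t=B(E(t,\cdot))$), apply the approximate-identity estimate $\int s_t\,d\mu=w(s)+o(1)$ from Lemma~\ref{lem1200}'s proof to the $\|\S_z\|^2_\mu$ term, and invoke Lemma~\ref{lem1200} for the $\|K_z\|^2_\mu$ term. The bookkeeping then yields the claimed $o(t)$.
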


Recall that $w(s)\neq 0 $ at a.e. $s$ and therefore the formula above makes sense for a.e. $s$.
We denote by $<\cdot, \cdot>_\mu$ the inner product in
$L^2(\mu)$.

\begin{proof} Let $z_t\in Q(s,C/t)$.
Using the   notation $s_t$ from the  proof of Lemma~\ref{lem1200},
$$<K(t,z_t, \cdot)-\frac{1}{w(s)}\S(t,z_t, \cdot),K(t,z_t, \cdot)-\frac{1}{w(s)}\S(t,z_t, \cdot)>_\mu= $$$$
=K(t,z_t,z_t)+\frac{||\S(t,z_t, \cdot)||^2_2}{w^2(s)}\int s_t d\mu - \frac{2}{w(s)}\S(t,z_t, z_t)=$$$$
= K(t,z_t,z_t)-\frac{1}{w(s)}\S(t,z_t, z_t)+o(||\S(t,z_t, \cdot)||^2_2).
$$
Since
$$\S(t,z_t, z_t)=||\S(t,z_t, \cdot)||^2_2\asymp t,
$$
the statement follows from Lemma \ref{lem1200}.
\end{proof}

From the $L^2$-approximation of the kernels we now pass to the uniform approximation  near $s$.

If $I$ is an interval on $\R$  and $C>0$ we denote by $CI$ the interval with the same center as $I$
of length $C|I|$.

\begin{lemma}\label{supK-S} For a.e. $s\in \R$ and any  $C>0$,
\begin{equation} \sup_{\l,z\in Q(s,C/t))}\left|K(t,\l, z)-\frac{1}{w(s)}\S(t,\l, z)\right|= o(t)\textrm{ as }t\to\infty.     \end{equation}

\end{lemma}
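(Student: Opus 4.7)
The plan is to upgrade the $L^2(\mu)$-approximation of Lemma \ref{lemL2} to a uniform pointwise approximation by combining the reproducing property of $K(t,\cdot,\cdot)$ with the Cauchy--Schwarz inequality, using Lemma \ref{lem1200} to control the norm of the reproducing kernel itself.

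First I would observe that, since $B(E(t,\cdot))=PW_t$ as sets, the sinc function $\S(t,\l,\cdot)$ lies in $B(E(t,\cdot))$ for every $\l\in\C$. Fix a point $s$ in the common full-measure subset of $\R$ where Lemmas \ref{lem1200} and \ref{lemL2} both hold, and let $\l,z\in Q(s,C/t)$. The function
$$F_\l(\cdot)=K(t,\l,\cdot)-\frac{1}{w(s)}\S(t,\l,\cdot)$$
then belongs to $B(E(t,\cdot))$, so the reproducing property (valid in the $L^2(\mu)$-inner product since the embedding $B(E(t,\cdot))\hookrightarrow L^2(\mu)$ is isometric) applied at $z$ gives the identity
$$K(t,\l,z)-\frac{1}{w(s)}\S(t,\l,z)=<F_\l,K(t,z,\cdot)>_\mu.$$
Cauchy--Schwarz then yields
$$\left|K(t,\l,z)-\frac{1}{w(s)}\S(t,\l,z)\right|\leq \|F_\l\|_\mu\cdot\|K(t,z,\cdot)\|_\mu.$$

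To close the argument I would bound each factor. By Lemma \ref{lemL2}, $\|F_\l\|_\mu=o(\sqrt{t})$ uniformly over $\l\in Q(s,C/t)$. The second factor equals $\sqrt{K(t,z,z)}$, and by Lemma \ref{lem1200} combined with the elementary estimate $\|\S(t,z,\cdot)\|_2^2\asymp t$ uniformly on $Q(s,C/t)$, it is $O(\sqrt{t})$ uniformly in $z\in Q(s,C/t)$. The product is therefore $o(\sqrt{t})\cdot O(\sqrt{t})=o(t)$, and since all of the estimates are uniform in $\l,z\in Q(s,C/t)$, taking the supremum over both variables gives the desired conclusion. Because the argument is a mechanical consequence of previously established estimates via the reproducing kernel identity, there is no genuine obstacle here; the real work was done in Lemmas \ref{lem1200} and \ref{lemL2}.
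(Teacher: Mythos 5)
Your proof is correct, and it takes a genuinely different and more economical route than the paper's. The argument hinges on two facts you use carefully: the sinc kernel $\S(t,\l,\cdot)$ lies in $PW_t = B(E(t,\cdot))$ as a set, so $F_\l = K(t,\l,\cdot)-w(s)^{-1}\S(t,\l,\cdot)\in B(E(t,\cdot))$; and the isometric embedding $B(E(t,\cdot))\hookrightarrow L^2(\mu)$ lets the reproducing identity $F_\l(z)=\langle F_\l,K(t,z,\cdot)\rangle$ be read in the $L^2(\mu)$-pairing. Cauchy--Schwarz then delivers the pointwise estimate, with the first factor $o(\sqrt t)$ supplied by Lemma~\ref{lemL2} and the second factor $\|K(t,z,\cdot)\|_\mu=\sqrt{K(t,z,z)}=O(\sqrt t)$ supplied by Lemma~\ref{lem1200} together with $\|\S(t,z,\cdot)\|_2^2\asymp t$ on $Q(s,C/t)$.

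The paper instead factors $\Delta(t,z)=K(t,z_t,z)-w(s)^{-1}\S(t,z_t,z)$ through an outer function $G$ with $|G|^2=w$, puts $F=e^{itz}G\Delta$ in $H^2$ with $\|F\|_{H^2}^2=o(t)$, invokes the strong $L^2$-boundedness of the non-tangential maximal operator to locate two good base points in $5I_t\setminus 3I_t$ where both $MF$ and $\sqrt{MP\log w}$ are controlled, and then transports the bound to the interior of a rhombus by the maximum principle and the reflection symmetry of the real entire function $\Delta$. Your route avoids all of this machinery: no outer function, no maximal-function pigeonholing, no geometric reflection argument. What the paper's method buys, in principle, is a bound on a region somewhat larger than $Q(s,C/t)$ and a template that depends only on controlling a Hardy-space norm (which might be useful if one only had the weighted $L^2(w\,dx)$ estimate rather than the full $L^2(\mu)$ estimate); but since Lemma~\ref{lemL2} does give the $\mu$-norm bound and Lemma~\ref{lem1200} gives the diagonal kernel bound uniformly over the box, your Cauchy--Schwarz shortcut is fully justified and is the cleaner proof of this particular statement.
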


\begin{proof}
Let $z_t\in Q(s,C/t))$.
Let $G$ again be an outer function satisfying $|G|^2=w$.
Define $\Delta(t,z)$ as
$$\Delta(t,z)=K(t,z_t, z)-\frac{1}{w(s)}\S(t,z_t, z).$$

Then, by Lemma \ref{lemL2},
for $F(t,z)=e^{itz}G(z)\Delta(t,z)$ we have
$$||F(t,\cdot)||^2_{H^2}=o(t).$$
From the strong $L^2$ type of the non-tangential maximal operator $M$ it follows that
$$||MF(t,\cdot)||^2_2=o(t).$$

We will denote by $I_t$ the interval $Q(s,C/t)\cap \R$.
 Consider the set $5I_t\setminus 3I_t$ which is a union of two intervals $J^1_t$ and $J^2_t$.
On two-thirds of each of $J_t^1$ and $J_t^2$,
$$(MF)^2\leq  \frac{3||MF||^2_{2}t}{ C}=o(t^2).$$
We denote by $S_t^1$ and $S_t^2$ the subsets of $J_t^1$ and $J_t^2$ correspondingly where this inequality is satisfied.

Once again we notice that the function $R=\sqrt{MP\log  w}$
is locally summable and  assume that $s$ is its Lebesgue point.

 It follows that for sufficiently large  $t$ there exist points $x_1,x_2$ in
$S_t^1,S_t^2$ correspondingly such that $R(x_k)<2R(s)$ and therefore
$$\max_{z\in \G_{x_k}, \Im\  z\leq 3C/t} |\Delta(t,z)|\leq 2e^{3C} e^{2R^2(s)} MF(x_k) =o(t),$$
where the right hand side does not depend on a particular choice of $z_t\in Q(s,C/t))$.
In particular, the last inequality is satisfied on the part of the boundary of the rhombus
\begin{equation}\C\setminus \cup_{x\not\in (x_1,x_2)} (\Gamma_x \cup \bar\Gamma_x)\label{eq0A}\end{equation}
in $\C_+$.
Since $\Delta$ is a real entire function for each fixed $t$, the last inequality
must also be satisfied on the boundary of the rhombus in $\C_-$, and therefore inside the rhombus. It remains to notice
that the rhombus contains the box $Q(s,C/t))$.
\end{proof}

\section{Back to $E$}\label{secE}

From the estimates of reproducing kernels obtained in the previous section we now obtain estimates
for the Hermite-Biehler functions $E$ and $\ti E$. Recall that $E=A-iC$ and $\ti E=B-iD$ for
$$A(t,z)=u(t,z),\ C(t,z)=v(t,z),\ B(t,z)=\ti u(t,z)\textrm{ and }D(t,z)=\ti v(t,z),$$
where
$$\begin{pmatrix}u \\ v
\end{pmatrix}\textrm{ and }\begin{pmatrix}\ti u \\ \ti v
\end{pmatrix}
$$
are Neumann and Dirichlet solutions of \eqref{eqDS}.

We denote by $D(t,\l,z)$ and $R(t,\l,z)$ the numerators of the kernels $K(t,\l,z)$ and $\S(t,\l,z)$ correspondingly:
\begin{equation}
D(t,\l,z)=\det\begin{pmatrix}A(t,z) & \bar A(t,\l) \\ C(t,z) & \bar C (t,\l)
\end{pmatrix}=\frac 1{2i}\det\begin{pmatrix}E(t,z) & E(t,\bar \l) \\ E^\#(t,z) & E^\# (t,\bar \l)
\end{pmatrix},\label{eq1004}\end{equation}
$$
R(t,\l,z)=\det\begin{pmatrix} \cos tz & \overline \cos t\l \\ \sin tz & \overline \sin t\l
\end{pmatrix}=$$$$=\cos tz \sin t\bar \l - \sin tz \cos t\bar \l =\sin [t(\bar \l -z)].
$$

\begin{lemma}\label{lm500}
For a.e. $s\in \R$ and any $C>0$,
\begin{equation}
\sup_{\l,z\in Q(s,C/t)}\left|D(t,\l,z)-\frac{1}{w(s)}R(t,\l,z)\right|=\label{eqDet0}
\end{equation}
$$=\sup_{\l,z\in Q(s,C/t)}\left|D(t,\l,z)-\frac{1}{w(s)}\sin [t(\bar \l -z)]\right|=o(1)
$$
as $t\to\infty$.
\end{lemma}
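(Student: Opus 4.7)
My plan is to reduce Lemma \ref{lm500} directly to the uniform reproducing-kernel estimate of Lemma \ref{supK-S}. The driving observation is that both $D(t,\l,z)$ and $R(t,\l,z)$ are precisely the numerators of $K(t,\l,z)$ and $\S(t,\l,z)$ in the standard representations recalled in Section \ref{secHB}. Writing out the reproducing kernel formula
$$K(t,\l,z) = \frac{1}{2\pi i}\frac{E(t,z)E^\#(t,\bar\l) - E^\#(t,z)E(t,\bar\l)}{\bar\l - z}$$
and expanding the determinant in the definition of $D$, one obtains the algebraic identity $D(t,\l,z) = \pi(\bar\l - z)K(t,\l,z)$. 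The analogous computation for the sinc kernel, using that $\sin[t(\bar\l-z)]/(\bar\l-z) = \sin[t(z-\bar\l)]/(z-\bar\l)$, yields $R(t,\l,z) = \pi(\bar\l - z)\S(t,\l,z)$.

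With these two identities in hand, the difference of interest factors cleanly:
$$D(t,\l,z) - \frac{1}{w(s)}R(t,\l,z) = \pi(\bar\l - z)\left[K(t,\l,z) - \frac{1}{w(s)}\S(t,\l,z)\right].$$
For $\l,z \in Q(s,C/t)$ one has $|\bar\l - z| \leq |\bar\l - \bar s| + |s - z| = |\l - s| + |z - s| \leq 2\sqrt{2}\,C/t$, so the prefactor is $O(1/t)$ uniformly on the box. Lemma \ref{supK-S} asserts that for almost every $s$ the bracket is $o(t)$ uniformly for $\l,z \in Q(s,C/t)$, so the product is $O(1/t)\cdot o(t) = o(1)$ as required, which gives both equalities in the statement (the second equality is just the explicit formula for $R$).

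Because Lemma \ref{supK-S} already carries the hard uniform approximation of the kernels, I do not anticipate any substantive obstacle here. The only step requiring care is to verify the two algebraic identities relating $(D,R)$ to $(K,\S)$---in particular matching the factor of $\pi$ and getting the sign right via the oddness of $\sin[t\,\cdot\,]/\,\cdot\,$---but once those are pinned down, the reduction of Lemma \ref{lm500} to Lemma \ref{supK-S} is a one-line estimate. Conceptually, this lemma is the bookkeeping step that converts the reproducing-kernel universality of the previous section into an approximation of the Hermite--Biehler determinant that will be used in the subsequent section to approximate $E(t,\cdot)$ and $\ti E(t,\cdot)$ by sines and cosines.
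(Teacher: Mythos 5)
Your argument is correct and is essentially identical to the proof in the paper: both factor $D-\tfrac{1}{w(s)}R$ as $(\bar\l-z)$ times $K-\tfrac{1}{w(s)}\S$ and then invoke Lemma \ref{supK-S} together with $|\bar\l-z|\lesssim 1/t$ on $Q(s,C/t)$. (You track the factor of $\pi$ more carefully than the paper does, but that is immaterial to the $o(1)$ conclusion.)
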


\begin{proof}
Note that
$$\left|D(t,\l,z)-\frac{1}{w(s)}R(t,\l,z)\right|=\left|K(t,\l,z)-\frac{1}{w(s)}\S(t, \l,z)\right||\bar \l-z|.
$$
Now the statement follows from Lemma \ref{supK-S} because $|\bar \l-z|\lesssim 1/t$ for $\l,z\in Q(s,C/t)$.

\end{proof}


The last lemma admits the obvious self-improvement: one can allow the size of the
box $Q$ tend to zero slower than $1/t$.

\begin{corollary}\label{cor01}
For a.e. $s\in \R$ there exists a function $C(t)>0, C(t)\to\infty$ as $t\to\infty$, such that
\begin{equation}
\sup_{\l,z\in Q(s,C(t)/t)}\left|D(t,\l,z)-\frac{1}{w(s)}R(t,\l,z)\right|=\label{eqDet}
\end{equation}
$$=\sup_{\l,z\in Q(s,C(t)/t)}\left|D(t,\l,z)-\frac{1}{w(s)}\sin [t(\bar \l -z)]\right|=o(1)
$$
as $t\to\infty$.

\end{corollary}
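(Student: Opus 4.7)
The plan is to deduce Corollary~\ref{cor01} from Lemma~\ref{lm500} by the standard rate-slowing diagonalization in the parameter $C$. Lemma~\ref{lm500} supplies, for each fixed real $C>0$, a set $S_C\subset\R$ of full Lebesgue measure on which
\[
\varepsilon_C(t,s):=\sup_{\l,z\in Q(s,C/t)}\left|D(t,\l,z)-\frac{1}{w(s)}R(t,\l,z)\right|=o(1)
\]
as $t\to\infty$. First I would form the countable intersection $S:=\bigcap_{n\in\N}S_n$, which still has full measure, and work with $s\in S$; on this set the statement of Lemma~\ref{lm500} holds simultaneously for every positive integer $C=n$.

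Fix $s\in S$. For each $n\in\N$ choose $T_n=T_n(s)$ with $\varepsilon_n(t,s)<1/n$ whenever $t\geq T_n$; after replacing $T_n$ by $\max(T_1,\ldots,T_n)+n$ we may assume the $T_n$ are strictly increasing with $T_n\to\infty$. Now define
\[
C(t):=n\qquad\text{for } t\in[T_n,T_{n+1}),
\]
so that $C(t)\to\infty$. For such $t$ one has $Q(s,C(t)/t)=Q(s,n/t)$, hence the supremum on the left of \eqref{eqDet} is at most $\varepsilon_n(t,s)<1/n$, which tends to $0$ as $t\to\infty$. This gives the corollary with this particular $s$-dependent choice of $C(\cdot)$; since $S$ has full measure, the conclusion holds for a.e.\ $s\in\R$.

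There is no genuine obstacle here — the corollary is a cosmetic self-improvement of Lemma~\ref{lm500}, and the only subtlety is the bookkeeping point that the diagonalization must be performed \emph{inside} a single full-measure set of $s$'s (otherwise one would end up with a union of null sets indexed by $C$, which need not be null). The preliminary countable-intersection step $S=\bigcap_n S_n$ takes care of this, after which the construction of $C(t)$ is purely mechanical. Monotonicity of the supremum in $C$ would also allow one to interpolate between integer values of $C$ should one wish a continuous, rather than step, function $C(t)$, but this is not needed for the statement as given.
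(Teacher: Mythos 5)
Your proposal is correct and is exactly the argument the paper has in mind: the paper itself records the corollary only with the one-line remark that it is an ``obvious self-improvement'' of Lemma~\ref{lm500}, and the rate-slowing diagonalization you wrote out (countable intersection over integer $C$, then a step function $C(t)$ defined from thresholds $T_n$) is the standard way to make that precise.
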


We can now proceed to the approximation of the Hermite-Biehler function $E$ near $s$ in the case when $s$ is close to a resonance (recall that a resonance of the system \eqref{eqDS} is a zero of $E$ or, equivalently, a pole of $\theta_E$).

We define $T_0(s,C)\subset \R_+$ as the set of all $t$ for which
$Q(s,C/t)$ contains a zero  of $E(t,\cdot)$. Here $C$ can be a constant or a function of $t$.

Let us denote by $\g(p)$ the function 

\begin{equation}\g(p)=\sqrt{2}/\sqrt{\sinh [2p]}.\label{eqGamma}\end{equation}

\begin{lemma}\label{sinus} For a.e. $s\in \R$ such that $T_0(s,D)$ is unbounded for some constant $D>1$
there exists $C(t)>D, C(t)\to\infty$ as $t\to\infty$, with the following properties.

 Consider a continuous function $z(t)=x(t)-iy(t)$ on $T_0(s,C)$ such that for each $t\in T_0(s,C)$, $z(t)$ is one of the zeros of $E(t,\cdot) $  in $Q(s,C(t)/t)$. Then for, those $t$ for which $ty(t)>1$,
\begin{equation}\sup_{z\in Q(s,C(t)/t)} \left|E(t,z)- \frac {\a(s,t)\g(ty(t))}{\sqrt{w(s)}}\sin [t(z - z(t))]\right|=o(1),\label{eq1701}\end{equation}
 for some unimodular continuous function $\a(s,t)$ as $t\to\infty,\ t\in T_0(s,C)\cap \{ty(t)>1\}$.
\end{lemma}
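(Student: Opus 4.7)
The plan is to recover $E(t,z)$ from the sine-type approximation of $D(t,\l,z)$ supplied by Corollary~\ref{cor01} by specializing $\l=\overline{z(t)}$. Since the zeros of a Hermite--Biehler function lie in $\overline{\C_-}$ and we assume $ty(t)>1$, the point $z(t)=x(t)-iy(t)$ is strictly in $\C_-$, and its reflection $\overline{z(t)}=x(t)+iy(t)$ sits in $\C_+\cap Q(s,C(t)/t)$.

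First I would plug $\l=\overline{z(t)}$ (so $\bar\l=z(t)$) into~\eqref{eq1004}. Since $E(t,z(t))=0$, one column of the determinant collapses and
$$D(t,\overline{z(t)},z)=\frac{E^\#(t,z(t))}{2i}\,E(t,z).$$
Corollary~\ref{cor01} then yields, uniformly in $z\in Q(s,C(t)/t)$,
$$E(t,z)=\frac{-2i}{w(s)\,E^\#(t,z(t))}\sin[t(z-z(t))]+\frac{o(1)}{E^\#(t,z(t))}.$$
Next I would pin down $|E^\#(t,z(t))|=|E(t,\overline{z(t)})|$ via the diagonal kernel identity
$$K(t,\overline{z(t)},\overline{z(t)})=\frac{|E(t,\overline{z(t)})|^2-|E(t,z(t))|^2}{4\pi y(t)}=\frac{|E^\#(t,z(t))|^2}{4\pi y(t)},$$
combined with Lemma~\ref{supK-S} and the elementary $\S(t,\overline{z(t)},\overline{z(t)})=\sinh[2ty(t)]/(2\pi y(t))$. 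This gives
$$|E^\#(t,z(t))|^2=\frac{2\sinh[2ty(t)]}{w(s)}\bigl(1+o(1)\bigr),$$
which is bounded away from $0$ on $\{ty(t)>1\}$, so the additive error above is uniformly $o(1)$, and the main coefficient $-2i/(w(s)E^\#(t,z(t)))$ has modulus $(1+o(1))\g(ty(t))/\sqrt{w(s)}$. I would then set $\a(s,t)$ to be the unimodular phase of that coefficient; continuity in $t$ follows because $z(t)$ is continuous and $E^\#$ has no real zeros.

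The main obstacle is to verify that replacing the true coefficient by its $\a(s,t)\g(ty(t))/\sqrt{w(s)}$ normalization preserves the uniform $o(1)$ bound on $Q(s,C(t)/t)$, not merely along the real axis. Using the identity
$$|\sin[t(z-z(t))]|^2=\sinh^2[t(\Im z+y(t))]+\sin^2[t(\Re z-x(t))],$$
one sees that $|\sin[t(z-z(t))]|\lesssim e^{2C(t)}$ on the box, while $\g$ is bounded on $\{ty(t)\geq 1\}$; consequently the residual from the phase normalization contributes $O(e^{2C(t)})\cdot o(1)$. This forces the standard diagonal argument: the $o(1)$ rate $\eta(t)$ in Corollary~\ref{cor01} and the $o(t)$ rate in Lemma~\ref{supK-S} are fixed once $C$ is fixed, so I would choose $C(t)\to\infty$ slowly enough---say $C(t)\asymp|\log\eta(t)|^{1/2}$---so that $\eta(t)e^{2C(t)}\to 0$. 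With such $C(t)$ the desired uniform approximation~\eqref{eq1701} follows.
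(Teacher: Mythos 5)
Your proposal is correct and arrives at the same pivotal identification as the paper, namely
$$|E^\#(t,z(t))|^2=\frac{2\sinh[2ty(t)]}{w(s)}\bigl(1+o(1)\bigr),$$
but by a genuinely different route. The paper, after reducing $D$ to a multiple of $E$ via the zero $z(t)$ (exactly as you do, up to a cosmetic swap of which kernel slot carries $z(t)$), computes a $2\times 2$ determinant whose rows are the sine approximations of $i\b E$ and $-i\bar\b E^\#$ and compares it, through a string of trigonometric product-to-sum identities, with the exact determinant coming from~\eqref{eq1004} and Lemma~\ref{lm500}; equating the leading terms yields $2|\b|^2w(s)=(1+o(1))\sinh[2ty(t)]$, with $\b=A(t,z(t))=E^\#(t,z(t))/2$. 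You instead evaluate the reproducing kernel on the diagonal at $\overline{z(t)}$, where the numerator collapses to $|E^\#(t,z(t))|^2$ because $E^\#(t,\overline{z(t)})=0$, and compare with $\S(t,\overline{z(t)},\overline{z(t)})=\sinh[2ty(t)]/(2\pi y(t))$ via Lemma~\ref{supK-S}; the division by $4\pi y(t)$ and the elementary bound $u/\sinh 2u\le 1/\sinh 2$ for $u>1$ then give the same relation without any trigonometric manipulation. This is cleaner and exposes the geometric content (you are simply reading off the norm of the reproducing kernel at the reflected zero). You are also right to flag, and treat, the one real subtlety: passing from a fixed box size $L$ to a slowly growing $C(t)$ must be diagonalized so that the $\eta_C(t)$ rates in Lemma~\ref{lm500}/\ref{supK-S} are chosen small relative to $e^{2C}$ before $C$ is allowed to grow; the paper glosses this step with the phrase ``the statement can be improved from constant $L$ to $L(t)\to\infty$,'' so your observation that one needs $\eta_{C(t)}(t)e^{2C(t)}\to0$ is a legitimate (and easy to satisfy) refinement rather than a gap in your own argument.
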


\begin{remark}\label{rem007}
As follows from our proof below, if one omits the restriction  $ty(t)>1$ then \eqref{eq1701} holds with
$o(\g(ty(t)))$ instead of $o(1)$ in the right hand side.

The restriction is included in the statement because in the main proofs we
only need the estimates in the case  $ty(t)>1$.

\end{remark}

\begin{proof}
Let $s$ and $C_1(t)$ be such that \eqref{eqDet} is satisfied (with $C=C_1$).

Since $E=A-iC$ vanishes at $z(t)$,
$$A(t,z(t))=iC(t,z(t))=\b$$ and $$D(t,z(t),w)=\b C(t,\bar w)+i\b A(t,\bar w)=i\b E(t,\bar w).$$
Hence, $i\b E(t, w)$ satisfies
$$\sup_{w\in Q(t,C_1/t)}\left|i\b E( t,w)-\frac{1}{w(s)}\sin [t(w-z(t))]\right|=o(1)$$   by Corollary \ref{cor01}.

Let us first establish \eqref{eq1701} for $Q(s,L/t)$ with a constant $L>2D$ in place of $C(t)$.
On one hand, from the last equation for all  $z,w\in Q(s,L/t)$,
$$\det\begin{pmatrix}i\b E(t,z) & i\b E(t,\bar w) \\ -i\bar\b E^\#(t,z) & -i\bar\b E^\# (t,\bar w)
\end{pmatrix}=
$$
$$\frac 1{w(s)^2}\det\begin{pmatrix}  \sin [t(z-z(t))] &  \sin [t(\bar w-z(t))] \\  \sin [t(z-\bar z(t))] &  \sin [t(\bar w-\bar z(t))]\end{pmatrix}+o(1)\psi_1(t,z,w)=
$$
$$ \frac 1{2w(s)^2} [\cos [t((z-\bar w)-2iy(t))] -\cos [t((z-\bar w)]+2iy(t))]+$$
\begin{equation} +o(1)\psi_1(t,z,w)=\frac 1{w(s)^2}\sin [2ity(t)] \sin [t(\bar w -z)]+o(1)\psi_1(t,z,w),\label{eq00a}\end{equation}
as $t\to\infty$ for some bounded function $\psi_1$. On the other hand,  by \eqref{eq1004} and Lemma \ref{lm500}, for  a.e. s, any $z,w\in Q(s,L/t)$
and $t\in T_0(s,L)$,
$$\frac{2i}{w(s)}\sin [t(\bar w -z)]=\det\begin{pmatrix} E(t,z) &  E(t,\bar w) \\  E^\#(t,z) &  E^\# (t,\bar w)
\end{pmatrix}+o(1)\psi_2(t,z,w)
$$
as $t\to\infty$ for some bounded function $\psi_2$.

Comparing the last two equations we obtain
$$2|\b|^2w(s)=(1+o(1))\sinh [2ty(t)]$$
as $t\to\infty,\ t\in T_0(s,L)$.

Altogether, using that $ty(t)>1$, we get
$$\sup_{z\in Q(s,L/t)}\left|E(t,z)-\frac {\a\sqrt{ 2}}{\sqrt{w(s)\sinh [2ty(t)]}}\sin [t(z-z(t))]\right|=o(1)
$$
 for some continuous unimodular  $\a$ as $t\to\infty,\ t\in T_0(s,L)\cap \{ty(t)>1\}$.

Once again, considering larger $L$ the statement can be improved from constant $L$ to $L(t)\to\infty$.
The function $C$ from the statement can be chosen as $C(t)=L(t)$.

\end{proof}

Let $Q_\pm(s,C)=Q(s,C)\cap \C_{\pm}$. In terms of Dirac inner functions $\theta(t,z)$ the last Lemma can
be reformulated as follows

\begin{corollary}\label{cortheta}
If $s, C,\a$ and $z(t)$ are from \eqref{eq1701} then
\begin{equation}\sup_{z\in Q_+(s,C/t)}\left|\theta(t,z)-\bar\a^2\frac{\sin [t(z-\bar z(t))]}{\sin [t(z-z(t))]}\right|=o(1)
\label{eqTheta}\end{equation}
as $t\to\infty, t\in T_0(s,C)\cap \{ty(t)>1\}$.

\end{corollary}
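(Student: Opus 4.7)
The plan is to interpret $\theta = E^\#/E$ and to replace numerator and denominator using the approximation supplied by Lemma \ref{sinus}. Set $\beta = \alpha(s,t)\g(ty(t))/\sqrt{w(s)}$ and $\Sigma(t,z) = \beta \sin[t(z-z(t))]$, so that Lemma \ref{sinus} reads $E(t,z) = \Sigma(t,z) + o(1)$ uniformly on $Q(s,C(t)/t)$. Because $Q(s,C/t)$ is symmetric about $\R$ and $F^\#(z) = \overline{F(\bar z)}$, the bound transfers to $E^\#(t,z) = \Sigma^\#(t,z) + o(1)$ on the same box, where the identity $\overline{\sin w} = \sin\bar w$ gives $\Sigma^\#(t,z) = \bar\beta \sin[t(z-\bar z(t))]$. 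Since $\g(ty(t))$ and $w(s)$ are positive reals and $\alpha$ is unimodular, $\bar\beta/\beta = \bar\alpha^2$, so
$$\frac{\Sigma^\#(t,z)}{\Sigma(t,z)} = \bar\alpha^2\,\frac{\sin[t(z-\bar z(t))]}{\sin[t(z-z(t))]}$$
is exactly the target of \eqref{eqTheta}. It thus suffices to show $|\theta - \Sigma^\#/\Sigma| = o(1)$ on $Q_+(s,C/t)$.

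The main step I expect to be the real work is a uniform lower bound on $|\Sigma|$ over $Q_+(s,C/t)$. Writing $z = \xi+i\eta \in Q_+(s,C/t)$ and $z(t) = x(t) - iy(t)$, the identity $|\sin(a+ib)|^2 = \sin^2 a + \sinh^2 b$ yields
$$|\sin[t(z-z(t))]| \geq \sinh[t(\eta+y(t))] \geq \sinh[ty(t)].$$
Multiplying by $|\beta| = \sqrt{2}/\sqrt{w(s)\sinh[2ty(t)]}$ and using $\sinh[2p] = 2\sinh p\cosh p$,
$$|\Sigma(t,z)| \geq \sqrt{\tanh[ty(t)]/w(s)} \geq \sqrt{\tanh(1)/w(s)},$$
valid under the hypothesis $ty(t)>1$. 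Therefore $|E(t,z)| \geq |\Sigma(t,z)| - o(1)$ is bounded below by a positive constant on $Q_+(s,C/t)$ for sufficiently large $t\in T_0(s,C)\cap\{ty(t)>1\}$. The parallel inequality $\eta+y(t) > |\eta-y(t)|$ (since $\eta, y(t) \geq 0$) shows $|\Sigma^\#(t,z)| \leq |\Sigma(t,z)|$ throughout $\C_+$, so $|\Sigma^\#/\Sigma| \leq 1$ on $Q_+$.

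The conclusion then follows from the elementary identity
$$\theta - \frac{\Sigma^\#}{\Sigma} = \frac{E^\#-\Sigma^\#}{E} - \frac{\Sigma^\#}{\Sigma}\cdot\frac{E-\Sigma}{E},$$
combined with $E-\Sigma,\ E^\#-\Sigma^\# = o(1)$ from Lemma \ref{sinus}, the bound $|E|^{-1} = O(1)$ just established, and $|\Sigma^\#/\Sigma| \leq 1$. Both terms on the right are uniformly $o(1)$ on $Q_+(s,C/t)$, giving the claimed estimate. The only real obstacle is the lower bound on $|\Sigma|$; the hypothesis $ty(t) > 1$ is included precisely to force $\tanh[ty(t)] \geq \tanh(1)$ and thus prevent the denominator from degenerating as $t\to\infty$.
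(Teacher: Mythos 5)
Your proof is correct and supplies exactly the implicit argument the paper appeals to when it says Lemma \ref{sinus} ``can be reformulated'' as Corollary \ref{cortheta} — rewriting $\theta = E^\#/E$, transferring the approximation to $E^\#$ by symmetry of the box, and then the essential point: using $ty(t)>1$ to show $|\Sigma|\geq\sqrt{\tanh(1)/w(s)}$ on $Q_+$, which keeps the denominator away from zero and makes the algebraic decomposition close. The bound $|\Sigma^\#/\Sigma|\leq 1$ on $\C_+$ (which is just the inner-function property of the approximating Hermite–Biehler sine) is also the right observation for controlling the cross term.
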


Consider again the box $Q(s,C/t)$ from Corollary \ref{cor01}.
Recall that we denote by $T_0(s,C)$ the set of $t$ such that $Q(s,C/t)$ contains a zero of $E(t,\cdot)$.
In Lemma \ref{sinus} we used Corollary \ref{cor01} to obtain approximations for $E$  for $t\in T_0(s,C)$
Let us now discuss the case when \eqref{eqDet} holds but $t\not\in T_0(s,C)$, i.e., $Q(s,C/t)$ does not contain a zero
of $E(t,\cdot)$. Our goal is to show that then $E$ can be approximated by an exponential near $s$, see Corollary \ref{cor02} below.

Let $I_t=\R\cap Q(s,C/t)$. For $x,w\in I_t$ one can interpret $D(t,x,w)$ as a scalar product
of two $\R^2$-vectors and write \eqref{eqDet} as
$$D(t,x,w)=(A(t,x),C(t,x))^T\cdot (C(t,w),-A(t,w))=$$
\begin{equation}=\frac 1{w(s)}\sin [t(w-x)]+o(1)\psi(t,x),\label{eqsin1}\end{equation}
as $t\to \infty$ for some uniformly bounded $\psi$. Fix $t$ large enough so that
$o(1)\psi(t,x)<<1/w(s)$ and $C(t)>>2\pi$.
Let us consider  two fixed values of $w$ in $I_t$, $w_1$ and $w_2= w_1+\pi/2t$.
Since the last formula must hold for both $w_{1,2}$ and every $x\in I_t$, we see
that the vector $(A(t,x),C(t,x))^T$ has modulus bounded away from zero on $I_t$ and rotates around
the origin as $x$ runs over $I_t$. Since $|I_t|=C(t)/t>>2\pi/t$ the vector makes at least one full rotation.

Hence there exist points $x_0$ and $x_1$ on $I_t$ such that $E(t,x_0)$ is positive and $E(t,x_1)$ is negative imaginary.
Then  $C(t,x_0)=0$ and $A(t,x_1)=0$. Using \eqref{eqsin1} for $x=x_0, w=x_1$ we see that
$A(t,x_0)= c_1$ and $C(t,x_1)= c_2$ where $c_{1,2}$ are positive constants satisfying $c_1c_2=\frac 1{w(s)}\sin [t(x_0-x_1)]+o(1)$.

Using \eqref{eqDet} with $D(t,z,x_0)$ and $D(t,z,x_1)$ we see that $C(t,z)$ is within $o(1)$ from
$$\frac 1{c_1w(s)}\sin [t(z-x_0)]$$ and $A(t,z)$ from $$\frac 1{c_2w(s)}\cos [t(z-x_2)]$$ on $Q(s,C/t)$, where
$tx_2=tx_1-\pi/2$. Therefore
$E(t,z)$ is within $o(1)$ from $$\frac 1{c_1w(s)}\cos [t(z-x_2)]-\frac i{c_2w(s)}\sin [t(z-x_0)]$$
on $Q(s,C/t)$.

Let $\phi(x)=\arg E(t,x)$ and let $J_t=(s-4\pi/t,s+4\pi/t)$.
Notice that if $t|x_2-x_0|>\d\mod 2\pi$, then
$$\frac{\sup_{J_t} \phi'}{\inf_{J_t }\phi'}>1+\e$$
for some $\e=\e(\d)>0$. Since $C(t)\to\infty$, Lemma \ref{lemInner} implies that for large $t$ there is a zero
of $\theta_E$, in $Q(s,C/t)$, which contradicts our assumption that $t\not\in T_0(s,C)$.
Hence, $t|x_2-x_0|=o(1)\mod 2\pi$.
Similarly, we obtain a contradiction if $|c_1-c_2|>\delta$. Since
$$c_1c_2=\frac 1{w(s)}\sin [t(x_0-x_1)]+o(1)=\frac 1{w(s)}+o(1),$$
$c_{1,2}=1/\sqrt{w(s)}+o(1)$
 and $E(t,z)$ is within $o(1)$ from $$\frac {\b(t)}{\sqrt{w(s)}}e^{-itz}$$
on some $Q(s,C_1/t),\ C_1(t)\to\infty,$ for some $\b(t),\ |\b(t)|=1$.


Let $z(t)=u(t)-ip(t)$. Then for the second function in \eqref{eq1701} we have
$$\frac {\g(tp(t))}{\sqrt{w(s)}}\sin [t(z - z(t))]=$$$$=\frac {\g(tp(t))}{\sqrt{w(s)}}\sin [t((z-u(t))+ip(t))]=
$$$$
\frac {\sqrt{ 2}}{\sqrt{w(s)\sinh [2tp(t)]}}\left[\sin [t(z-u(t))]\cos [itp(t)]+\sin [itp(t)]\cos [t(z-u(t))]\right].
$$
Notice that
$$\frac {\sqrt{ 2}}{\sqrt{\sinh [2tp(t)]}}\cos [itp(t)]\to 1\textrm{ and  }\frac {\sqrt{ 2}}{\sqrt{\sinh [2tp(t)]}}\sin [itp(t)]\ \to\ -i
$$
as $p(t)\to\infty$ and therefore the second function in \eqref{eq1701} tends to $$\frac{-i\a(t)}{\sqrt{w(s)}}e^{itz},$$ which is within $o(1)$ from $E$
on $Q(s,C_1/t)$ if we put $\a=i\b$.

Summarizing the above discussion
we see that \eqref{eq1701} holds not only for $t\in T_0(s,C)$, for which $z(t)$ in \eqref{eq1701} can be chosen as a zero of
$E$ in $Q(s,C/t)$ (or a point close to zero as in Lemma \ref{cos} 
 below), but for all $t$ with some $z(t)$. When $t\not\in T_0(s,C)$,
$z(t)$ in \eqref{eq1701} needs to satisfy $\Im\  z(t)>C/t$, in which case the approximating function is close to an exponential on a smaller box.

As before, $T_0(s,C)$ denotes the set of those $t$ for which $Q(s,C/t)$ contains a zero of $E(t,\cdot)$.
We will denote by $T_1(s,C)$ the set of those $t$ for which $Q(s,C/t)$ does not contain a zero $z(t)$ of $E(t,\cdot)$
satisfying $t\Im\  z(t)\geq -1$ (recall that all zeros of $E(t,\cdot)$ are in $\C_-$). Note that,
those $t$ not contained in  $ T_0(s,C)$ also fall into $T_1(s,C)$.
We obtain the following

\begin{corollary}\label{cor02}

1) For a.e. $s$ there exists $C(t)>0, C(t)\to\infty$ as $t\to\infty$, and  $z(t)=x(t)-iy(t)\in \C_-$ such that
\begin{equation}\sup_{z\in Q(s,C/t)}\left| E(t,z)- \frac {\a\g(ty(t))}{\sqrt{w(s)}}\sin [t(z - z(t))]\right|=o(1)\label{eqE=sin}\end{equation}
for some  $\a=\a(s,t), \  |\a |=1$ as $t\to\infty,\ t\in T_1(s,C)$. For $t\in T_0(s,C)$, $z(t)$ can be chosen as a zero of $E(t,\cdot)$.


2) If \eqref{eqE=sin} holds for some $s\in \R$ and some $C(t)>0, C(t)\to\infty$ as $t\to\infty,\ t\in T_1(s,C)$, then for any constant $D>0$,
\begin{equation}\sup_{z\in Q(s,D/t)}\left| E(t,z)- \frac {-i\a(s,t)}{\sqrt{w(s)}}e^{itz}\right|=o(1)\label{eqE=sin1}\end{equation}
as $t\to\infty$, $t\not\in T_0(s,C)$.
\end{corollary}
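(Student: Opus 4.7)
The proof splits into the two cases $t\in T_0(s,C)$ and $t\notin T_0(s,C)$. When $t\in T_0(s,C)\cap T_1(s,C)$, I would pick $z(t)$ to be any zero of $E(t,\cdot)$ in $Q(s,C(t)/t)$; the defining property of $T_1$ forces $ty(t)>1$, so \eqref{eqE=sin} with $o(1)$ error follows at once from Lemma~\ref{sinus}. The substantive case, which covers both the remainder of Part~1 and all of Part~2, is $t\in T_1(s,C)\setminus T_0(s,C)$, where $Q(s,C/t)$ contains no zero of $E(t,\cdot)$.

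For this case I would follow the geometric line sketched in the discussion preceding the statement. Starting from Corollary~\ref{cor01}, write $D(t,x,w)$ as a scalar product of $(A(t,x),C(t,x))^{T}$ and $(C(t,w),-A(t,w))$ and evaluate at $w_1,w_2=w_1+\pi/(2t)$ in $I_t=\R\cap Q(s,C/t)$. Since $|I_t|\gg 2\pi/t$, this forces the $\R^{2}$-valued map $x\mapsto(A(t,x),C(t,x))$ to have modulus bounded below and to complete at least one full rotation as $x$ traverses $I_t$, yielding points $x_0,x_1\in I_t$ at which $E(t,x_0)$ is real positive and $E(t,x_1)$ is negative imaginary. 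Re-applying Corollary~\ref{cor01} to the pairings $(z,x_0)$ and $(z,x_1)$ identifies $A(t,z)$ and $C(t,z)$, up to $o(1)$, with constant multiples of $\cos[t(z-x_2)]$ and $\sin[t(z-x_0)]$, where $tx_2=tx_1-\pi/2$. The decisive rigidity step is to show that $t|x_2-x_0|=o(1)\bmod 2\pi$ and that the two multiplicative constants coincide to $o(1)$: if either failed, the ratio $|\theta_E'|$ at two nearby real points in $(s-4\pi/t,s+4\pi/t)$ would be bounded away from $1$, and Lemma~\ref{lemInner} would then locate a zero of $\theta_E$ inside an Apollonian ball of radius $\asymp 1/t$ around $s$, contradicting $t\notin T_0$ once $C$ is chosen to grow sufficiently. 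Granted both vanishings, the baseline identity $c_1 c_2=1/w(s)+o(1)$ forces $c_{1,2}=1/\sqrt{w(s)}+o(1)$ and the two halves recombine into $E(t,z)=\frac{\b(t)}{\sqrt{w(s)}}e^{-itz}+o(1)$ on $Q(s,C_1/t)$ for some $C_1(t)\to\infty$ and unimodular $\b(t)$.

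To complete Part~1 in this case I would recast the exponential as a sinc by choosing $z(t)=s-ip(t)$ with $tp(t)\to\infty$. A straightforward computation using $\g(tp)=\sqrt{2}/\sqrt{\sinh[2tp]}$ together with $\cos[itp]=\cosh(tp)$ and $\sin[itp]=i\sinh(tp)$ shows that $\frac{\g(tp)}{\sqrt{w(s)}}\sin[t(z-z(t))]$ degenerates uniformly on $Q(s,D/t)$ to $\frac{-i\a(s,t)}{\sqrt{w(s)}}e^{itz}$, and one picks $\a$ so that $-i\a$ agrees with $\b$. Part~2 is then immediate: the exponential approximation derived above already lives on $Q(s,C_1/t)\supset Q(s,D/t)$ for every fixed $D$ and large $t$, and the sinc-to-exponential limit just noted is exactly \eqref{eqE=sin1}.

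The hard part will be the rigidity step based on Lemma~\ref{lemInner}: one has to calibrate the tolerance $\e$ in \eqref{eq2} against the scale $C(t)/t$ so that the Apollonian ball produced by the derivative-ratio inequality genuinely sits inside $Q(s,C/t)$, while simultaneously tracking the phase $\a(s,t)$ coherently through its three representations—sinc, separated sine/cosine, and exponential—so that the same $\a$ serves in \eqref{eqE=sin} and \eqref{eqE=sin1}.
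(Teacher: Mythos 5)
Your proposal reproduces the paper's own argument: the case $t\in T_0\cap T_1$ follows directly from Lemma~\ref{sinus}, and the case $t\in T_1\setminus T_0$ is handled by the scalar-product/rotation reading of Corollary~\ref{cor01}, the rigidity step via Lemma~\ref{lemInner}, the identity $c_1c_2=1/w(s)+o(1)$, and the sinc-to-exponential degeneration as $tp(t)\to\infty$, exactly as in the discussion preceding the statement in the text. The calibration worry you raise at the end is real but benign: since the two test points lie within $O(1/t)$ of $s$ and the derivative ratio is bounded away from $1$ by a fixed margin, Lemma~\ref{lemInner} produces a zero in a ball of radius $O(1/t)$, which lies inside $Q(s,C(t)/t)$ once $C(t)\to\infty$.
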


\begin{remark}\label{rem008}
Similarly to Remark \ref{rem007}, one can remove the restriction $t\in\{ty(t)>1\}$ in 1) and replace the right-hand side of \eqref{eqE=sin}
with  $o(\g(ty(t))$ for $t\to\infty,\ t\in T_0(s,C)$.
\end{remark}

\section{Joint approximations for $E$ and $\ti E$}\label{secJA}

Approximations obtained for the Neumann family of Hermite-Biehler functions $E(t,z)$ in the
previous section are also valid for the Dirichlet family $\ti E(t,z)$.
It will be more convenient for us to use cosines instead of sines for $\ti E$, which corresponds
to the substitution of $z(t)$ with $z(t)+\pi/2t$ in the last statement.

In this section
we establish relations between the parameters of the two approximating functions.


Recall that  $T_0(s,C)$ denotes the set of those $t$ for which $Q(s,C/t)$ contains a zero of $E(t,\cdot)$
and  $T_1(s,C)$ is the set of those $t$ for which $Q(s,C/t)$ does not contain a zero $z(t)$ of $E(t,\cdot)$
with $\Im\  z(t)\geq -1/t$.

\begin{lemma}\label{lemmacos}
For a.e. $s$ there exists $C(t)>0,\ C(t)\to\infty$ with the following properties.

For every $t\in T_1(s,C)$ there exist  $z(t)=u(t)-ip(t), \ti z(t)=\ti u(t)-ip(t)$ and $\alpha(t)=\a(s,t)$  such that
$p(t)> 0, |\a(t)|=1$ and
 $$\sup_{z\in Q(s,C/t)}\left|E(t,z)-\a(t)\frac{\g(tp(t))}{\sqrt{w(s)}}\sin[t(z-z(t))]\right|=o(1)$$ and
 $$\sup_{z\in Q(s,C/t)}\left|\ti E(t,z)-\a(t)\frac{\g(tp(t))}{\sqrt{\ti w(s)}}\cos[t(z-\ti z(t))]\right|=o(1)$$
 as $t\to\infty$. The function $p(t)$ satisfies $p(t)>C(t)$ for $t\not\in T_0(s,C)$ and
 $p(t)\leq C(t)$ for $t\in T_0(s,C)$; $u(t)$ and $\ti u(t)$ satisfy
\begin{equation}\cos [t(\ti u(t)-u(t))]=\sqrt{w(s)\ti w(s)}.\label{fin12}\end{equation}

%

%

\end{lemma}


\begin{proof}


Applying Corollary \ref{cor02} to $E$ and to $\ti E$ we obtain that for a.e. $s$,
\begin{equation}
\begin{gathered}
\sup_{z\in Q(s,D/t)}\left|E(t,z)-\b(t)\frac{\g(ty(t))}{\sqrt{w(s)}}\sin[(t(z-\xi(t)))]\right|=o(1)\textrm{ and }\\
 \sup_{z\in Q(s,D/t)}\left|\ti E(t,z)-\d(t)\frac{\g(t\ti y(t))}{\sqrt{\ti w(s)}}\cos[(t(z-\ti \xi(t)))]\right|=o(1)
 \label{eqapprox1}\end{gathered}\end{equation}
on $Q(s,D/t)$ for some $\xi(t)=x(t)-iy(t)$, $\ti \xi(t)=\ti x(t)-i\ti y(t)$,  any fixed constant $D>4\pi$ and unimodular $\b(t),\d(t)$.

Let us first assume that $\xi(t)\in Q(s,D/t)$.
If the approximating functions from \eqref{eqapprox1} are plugged into the determinant \eqref{eqDet2i} in place of $E$ and $\ti E$ we get
\begin{equation}\begin{gathered} \frac{\g(ty(t))\g(t\ti y(t))}{\sqrt{w(s)\ti w(s)}}(   \b\bar\d \sin [t(z - \xi(t))]\cos [t(z - \bar{ \ti \xi}(t))] - \\
 \bar \b\d \sin [t(z -\bar  \xi(t))]\cos [t(z -  \ti \xi(t))]).
\label{det1}\end{gathered}\end{equation}
 Notice that \eqref{eqapprox1} implies that
$|E|,|\ti E|$ are bounded on $Q(s,D/t)$ uniformly with respect to $t$,
which implies that the  expression in \eqref{det1} is within $o(1)$ from
the determinant in \eqref{eqDet2i} on $Q(s,D/t)$.

For $z=x\in\R$, \eqref{det1} becomes
$$ = 2i \Im\  \left(\frac{\g(ty(t))\g(t\ti y(t))}{\sqrt{w(s)\ti w(s)}} \b\bar\d \sin [t(x - \xi(t))]\cos [t(x - \bar {\ti \xi}(t))]\right)=
$$$$
=2i \Im\  \left(\frac{\g(ty(t))\g(t\ti y(t))}{\sqrt{w(s)\ti w(s)}} \b\bar\d \frac 12\left[ \sin [t(\xi(t)-\bar {\ti \xi}(t))] + \sin [t(2x- (\xi(t) +\bar {\ti \xi}(t)))]    \right]\right).
$$
 By our assumption $ty(t)\leq D$. Suppose that  $t|\ti y(t)-y(t)|>\Delta>0$.
   Then, with the first sine being constant,
the second sine has a comparable absolute value  and its argument grows by more than $2\pi$ on $I_t=Q(s,D/t)\cap\R$. Hence the expression
cannot be within $o(1)$ from $2i$ on $I_t$. This shows that $t|\ti y(t) - y(t)|=o(1)$. Hence we can replace $\ti y(t)$ with $ y(t)$
so that \eqref{eqapprox1} still holds (with a different $o(\cdot)$) and put $p(t)=y(t)$.

In the case $\xi(t)\in Q(s,D/t),\ ty(t)>1$, setting $z= \xi(t), \bar\xi(t)$, we obtain the following  equations from \eqref{det1}:
$$\frac {2i\bar\b\d}{\sqrt{w(s)\ti w(s)}}\cos [t( \ti x(t)-x(t))]=2i + o(1)\textrm{ and }
$$
$$\frac {2i\b\bar\d}{\sqrt{w(s)\ti w(s)}}\cos [t( \ti x(t)-x(t))]=2i +o(1).
$$

From these equations we see that the unimodular constants must satisfy $\b\bar\d=1+o(1)$
and put
$\a(t)=\b(t)=\d(t)+o(1)$.

Calculating the absolute values on each side of either
of the equations we obtain
$$\cos [t(\ti x(t)-x(t))]=\sqrt{w(s)\ti w(s)}+o(1)
$$
to see that $\ti x(t)$ and $x(t)$ can be changed into $\ti u(t)$ and $u(t)$ respectively to satisfy the equations of the Lemma.

We obtain the statement with a constant $D$ in place of $C(t)$. Since $D$ can be chosen arbitrarily large, standard argument allows us
to improve the statement to $C(t)\to\infty$.

In the case $t\not \in  T_0(s,C(t))$, similarly to the previous part one can show that $t\ti y(t)\to\infty$ as well. Then the functions $E$ and $\ti E$ can be approximated by exponential
functions as in \eqref{eqE=sin1}. It follows from the determinant relation \eqref{eqDet2i} that
$\a\bar{\ti\a}=ie^{i\phi}+o(1)$, where $\cos \phi=\sqrt{w(s)\ti w(s)}$. Approximating the exponentials
by $\sin$ and $\cos$ with $p(t)\to\infty$ we obtain the statement.
\end{proof}

\begin{remark}\label{remAC}
It is well known (Alexandrov-Clark formulas) that for a certain bounded analytic function $\phi$ in $\C_+$, $||\phi||_{H^\infty}\leq 1$,
$$w(s)=\frac{1-|\phi(s)|^2}{|1-\phi(s)|^2}, \ \ti w(s)=\frac{1-|\phi(s)|^2}{|1+\phi(s)|^2}\textrm{ and }\sqrt{w(s)\ti w(s)}=\frac{1-|\phi(s)|^2}{|1-\phi^2(s)|}\leq 1.$$
One can see that there always exists  $\d$ such that $\cos\d = \sqrt{w(s)\ti w(s)}$.
In the last statement $u_t$ and $\ti u_t=u_t+\d/t$ satisfy $$\cos [t(\ti u_t-u_t)]=\cos\d=\sqrt{w(s)\ti w(s)}.$$
Because of periodicity one can assume that $|\d|\leq \frac \pi 2$.
Recall that for almost all $s\in\R$, $\sqrt{w(s)\ti w(s)}\neq 0$ and $\d$ can be chosen so that $|\d|< \frac \pi 2$.
\end{remark}

In Lemma \ref{sinus} the  approximation of $E$  by a sine was constructed so that one of the zeros
of $E$  was also a zero of the approximating function.
In some of our future calculations it will be more convenient
for us to choose the approximating functions so that their values at $s$  coincided
with the value of the approximant. Our next Lemma states that it is possible to achieve such an approximation simultaneously for $E$ and $\ti E$ while keeping  the relations
between the parameters of the approximating functions from Lemma \ref{lemmacos}.

\begin{lemma}\label{cos} Suppose that $s\in \R$ and   $C>1$ is a constant. Suppose that  $\sqrt{w(s)\ti w(s)}\neq 0$.

There exists $\e_0>0$ such that for any $\e<\e_0$ the following holds.

If
$$\left|E(t,z) - \frac{\a\g(ty)}{\sqrt{w(s)}}\sin[t(z-(x -iy))]\right|<\e$$ and
$$\left|\ti E(t,z) -\frac{\a\g(ty)}{\sqrt{\ti w(s)}}\cos[t(z-(\ti x -iy))]\right|<\e$$
on $Q(s,3C/t)$ for some $t>0, x, \ti x, y \in \R, \ \a\in \C$, satisfying $ 1<ty<2C, |\a|=1$ and
\begin{equation}-\frac \pi 2< t(\ti x-x) < \frac \pi 2,\ \cos [t(\ti x-x)]=\sqrt{w(s)\ti w(s)},
\label{q-y}
\end{equation}
then
there exist $y', x ', \ti x '\in \R$ and $\a'\in\C, \ |\a'|=1$, satisfying
$$|tx '-tx |+|t\ti x '-t\ti x |+|ty'-ty|+|\a'-\a|<D \e,\ \ti x -x =\ti x '-x ',$$
for some  constant $D=D(C,s)$  and such that
\begin{equation}\begin{gathered}E(t,s) = \a'\frac{\g(ty)}{\sqrt{w(s)}}\sin[t(s-(x' -iy'))]\ \textrm{ and }\\ \ti E(t,s) = \a'\frac{\g(ty)}{\sqrt{\ti w(s)}}\cos[t(s-(\ti x' -iy'))].\label{eq1010}\end{gathered}\end{equation}

\end{lemma}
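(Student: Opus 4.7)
The hypothesis furnishes a map
\[
\Phi:(\theta,y,x)\longmapsto\Bigl(\tfrac{e^{i\theta}\g(ty)}{\sqrt{w(s)}}\sin[t(s-x+iy)],\ \tfrac{e^{i\theta}\g(ty)}{\sqrt{\ti w(s)}}\cos[t(s-x-q+iy)]\Bigr),
\]
with $q=\ti x-x$ fixed and $\a=e^{i\theta}$, that sends $(\theta,y,x)$ to a point within $\sqrt{2}\,\e$ of the target $(E(t,s),\ti E(t,s))\in\C^2$.  I will invert $\Phi$ exactly at the target and bound the displacement in parameter space.  Three real parameters must reach a four-real-dimensional target, so a compatibility is needed: the determinant identity \eqref{eqDet2i} gives $\Im(E(t,s)\overline{\ti E(t,s)})=1$, and a direct expansion of $\sin[a+ib]\,\overline{\cos[\ti a+ib]}$ using \eqref{q-y} and $|\a|=1$ gives $\Im(G_1\overline{G_2})=|\a|^2=1$.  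Both pairs therefore lie on the same $3$-dimensional submanifold $M=\{\Im(w_1\bar w_2)=1\}\subset\C^2$ and the dimension count is correct.

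Rather than invoke the implicit function theorem abstractly, I would perform the inversion explicitly.  Writing $e=E(t,s)\sqrt{w(s)}$, $\ti e=\ti E(t,s)\sqrt{\ti w(s)}$, $\phi=t(s-x'+iy')$, the target equations read
\[
 e=\a'\g(ty')\sin\phi,\qquad \ti e-e\sin tq=\a'\g(ty')\cos tq\cdot\cos\phi,
\]
and eliminating $\phi$ via $\sin^2\phi+\cos^2\phi=1$ yields $\a'^{\,2}\g(ty')^{2}=K/(w(s)\ti w(s))$ with $K=e^2+\ti e^2-2e\ti e\sin tq$.  Since $|\a'|=1$ and $\g^2>0$, the moduli give $\sinh(2ty')=2w(s)\ti w(s)/|K|$ (hence $y'>0$) and the arguments give $\a'^{\,2}=K/|K|$; the two-valued square-root ambiguity is resolved by choosing $\a'$ close to $\a$.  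Once $(\a',y')$ are fixed, $\sin\phi=e/(\a'\g(ty'))$ determines $\phi$ (choosing the branch close to the original $t(s-x+iy)$), and then $x'=s-\Re\phi/t$, $\ti x'=x'+q$.

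The delicate point---and main obstacle---is that the $\Im\phi$ forced by the formulas for $\sin\phi$ and $\cos\phi$ must agree with the previously chosen $ty'$.  The identity $|\sin\phi|^2+|\cos\phi|^2=\cosh(2\Im\phi)$ translates this into $\cosh(2ty')=P/|K|$ where $P=|e|^2+|\ti e|^2-2\Re(e\bar{\ti e})\sin tq$, which together with the already fixed $\sinh(2ty')=2w(s)\ti w(s)/|K|$ reduces to the algebraic identity
\[
 P^2-|K|^2=4w(s)^2\ti w(s)^2.
\]
I would verify this by factoring $K=U^2+V^2$ and $P=|U|^2+|V|^2$ with $U=e-\ti e\sin tq$, $V=\ti e\cos tq$; a direct computation gives $P^2-|K|^2=4\bigl(\Im(U\bar V)\bigr)^2=4\cos^2(tq)\bigl(\Im(e\bar{\ti e})\bigr)^2$, which equals $4w(s)^2\ti w(s)^2$ by the determinant identity $\Im(E(t,s)\overline{\ti E(t,s)})=1$.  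This is precisely where membership of the target in $M$ is used; without it the system would be over-determined.

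For the quantitative bound, in the range $1<ty<2C$ and with $\sqrt{w(s)\ti w(s)}>0$ fixed, the quantities $\g(ty)$, $\cos(tq)$ and $|K|$ stay in a compact subset of $(0,\infty)$, so each step in the inversion chain $(E(t,s),\ti E(t,s))\mapsto K\mapsto(\a'^{\,2},\sinh 2ty')\mapsto(\a',ty')\mapsto(\sin\phi,\cos\phi)\mapsto\phi\mapsto(x',\ti x',y',\a')$ is smooth with derivatives bounded by a constant $D=D(C,s)$.  Applying these bounds to the $\sqrt{2}\,\e$ perturbation of the target produces the desired estimate $|tx'-tx|+|t\ti x'-t\ti x|+|ty'-ty|+|\a'-\a|<D\e$; and the relation $\ti x'-x'=q=\ti x-x$ is preserved by construction.
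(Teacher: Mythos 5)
Your proof is correct and takes a genuinely different, more constructive route than the paper's. The paper introduces $f(z)=\sqrt{\ti w(s)/w(s)}\,\sin[t(z-(x-iy))]/\cos[t(z-(\ti x-iy))]$, matches $E(t,s)/\ti E(t,s)=f(a)$ by locating a complex shift $a$ in a disk of radius $D_1\e/t$ (using that $|f'|/t$ and $|f''|/t^2$ are bounded away from zero near $s$), sets $x'=x+\Re(a-s)$, $\ti x'=\ti x+\Re(a-s)$, $y'=y-\Im(a-s)$, and then observes that equality of the ratios together with the determinant identity $\Im\bigl(E(t,s)\overline{\ti E(t,s)}\bigr)=1$ forces the moduli to match as well, leaving $\a'$ to fix the common phase. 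You instead carry out an explicit algebraic inversion: you package the two equations into $K=e^2+\ti e^2-2e\ti e\sin tq$, read off $\a'^{\,2}=K/|K|$ and $\sinh 2ty'=2w(s)\ti w(s)/|K|$ from the polar decomposition of $K$, recover $\phi=t(s-x'+iy')$ from $\sin\phi=e/(\a'\g(ty'))$, and verify that the resulting $\Im\phi$ is consistent with $ty'$ via the identity $P^2-|K|^2=4w(s)^2\ti w(s)^2$, which you reduce to $\Im\bigl(E(t,s)\overline{\ti E(t,s)}\bigr)=1$ through the factorization $K=U^2+V^2$, $P=|U|^2+|V|^2$, $P^2-|K|^2=4\bigl(\Im(U\bar V)\bigr)^2$. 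This chain checks out: $\Im(U\bar V)=\cos tq\,\Im(e\bar{\ti e})=\cos tq\cdot\sqrt{w(s)\ti w(s)}\cdot 1=w(s)\ti w(s)$, giving exactly $4w^2\ti w^2$. The determinant identity thus plays the same closing role as in the paper, but your version makes the parameter count and the single hidden compatibility condition completely explicit, at the cost of more computation. One bookkeeping remark: as printed, \eqref{eq1010} carries the factor $\g(ty)$ rather than $\g(ty')$; this must be a typo, since with $\g(ty)$ frozen the elimination step would force the rigidity $\g(ty)^2=|K|/(w(s)\ti w(s))$ on the data, which generically fails by $O(\e)$. Your construction correctly produces $\g(ty')$, which is clearly the intended statement.
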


\begin{proof}
Put $$f(z)=\sqrt{\frac{\ti w(s)}{w(s)}}\cdot \frac{\sin[t(z-(x -iy))]}{\cos[t(z-(\ti x -iy))]}.$$
Note that under the restriction $t|\ti x-x|<\pi/2$, $f$ is not constant. Denote by $J$ the middle third of $I=Q(s,3C/t)\cap \R $.

Note that
$$|E(t,s)/\ti E(t,s)-f(s)|<2\e$$
for all $s\in J$, if $\e_0$ is small enough.
Under the restriction imposed on $y$, $|f'(z)/t|$ and $|f''(z)/t^2|$
are bounded and bounded away from zero in $1/2t$-neighborhood of $J$ for large enough $t$
by  constants depending only on  $\sqrt{w(s)/\ti w(s)}$.
Hence, for  small enough $\e$, there exists $D_1>0$ such that in the disk $B(s,D_1\e/t)$,
$f$ takes all values from $B(f(s),2\e)$. Let $a\in B(s,D_1\e/t)$ be such that
$f(a)=E(s)/\ti E(s)$. Then $\ti x' =\ti x +\Re\  (a-s),\ x' =x +\Re\  (a-s),\ y'=y-\Im\  (a-s)$ will satisfy
$$E(s)/\ti E(s)=\sqrt{\frac{\ti w(s)}{w(s)}}\frac{\sin[t(s+(x' +iy'))]}{\cos[t(s+(\ti x' +iy'))]}.$$
Recalling that $E,\ti E$ satisfy \eqref{eqDet2i}, this implies that
$$|E(s)|=\left|\frac{\g(ty)}{\sqrt{w(s)}}\sin[t(s+(x' +iy'))]\right|$$ and $$|\ti E(s)|=\left|\frac{\g(ty)}{\sqrt{\ti w(s)}}\cos[t(s+(\ti x' +iy'))]\right|
$$
in addition to
$$\arg \frac {E(s)}{\ti E(s)}=\arg  \frac{\sin[t(s+(x' +iy'))] }{ \cos[t(s+(\ti x' +iy'))]}.$$
Hence \eqref{eq1010} will hold with some $\alpha',\ |\a'|=1$. Then  $|\a-\a'|\lesssim\e$ will follow automatically form the inequalities in the statement.

\end{proof}

\section{Proof of the main theorem} \label{secPMT}

Our first goal is to establish that for a. e. $s\in\R$ and  any $C>0$ the box $Q(s,C/t)$ does not contain any resonances of the system for large enough $t$. We will then show that this is equivalent to pointwise convergence of $\nlhat f_T$.

Let $s\in\R$ be such that for some $C>>1$ the box $Q(s,C/t)$ contains a resonance for arbitrarily large $t$. Suppose that $(t_1,t_2)$ is an interval such 
that $Q(s,C/t)\cap\{\Im z<-1/2t\}$ contains a resonance for all $t\in(t_1,t_2)$. Suppose that $s, T$ are such that
the conclusion of Lemma \ref{lemmacos} holds for all $t>T$
with a small $\e$.

{\bf Calculation of the functions $a$ and $b$ for the interval $(t_1,t_2)$. } 
In this section we will find the scattering functions $a_{t_1 \to t_2}$  and $b_{t_1 \to t_2}$ corresponding to an interval $(t_1,t_2)\subset \R_+$.
The shortest way to define such  functions is to say that they are the functions $a, b$, defined as in Section \ref{NLFT}, corresponding
to the system \eqref{eqDS} whose potential function is equal to $f(t+t_1)$ on the interval $(0, t_2-t_1)$ and to $0$ elsewhere.

More constructively, if $M(t,z)$ is the transfer matrix of the system \eqref{eqDS} then one can define the transfer
matrix from $t=t_1$ to $t=t_2$ as
$$M_{t_1 \to t_2}(z)=\begin{pmatrix} A_{t_1 \to t_2}(z) & B_{t_1 \to t_2}(z) \\ C_{t_1 \to t_2}(z) & D_{t_1 \to t_2}(z)\end{pmatrix}
=M(t_2,z)M^{-1}(t_1,z).
$$
After that the Hermite-Biehler functions $E_{t_1 \to t_2}(z)$ and $\ti E_{t_1 \to t_2}(z)$ can be defined as
$$E_{t_1 \to t_2}=A_{t_1 \to t_2}-iC_{t_1 \to t_2},\ \ \ti E_{t_1 \to t_2}=B_{t_1 \to t_2}-iD_{t_1 \to t_2},$$
and
$$a_{t_1 \to t_2}(z)=\frac 12 e^{i(t_2-t_1)z}(E_{t_1 \to t_2}(z)+i \ti E_{t_1 \to t_2}(z)),$$
$$b_{t_1 \to t_2}(z)=\frac 12 e^{i(t_2-t_1)z}(E_{t_1 \to t_2}(z)-i \ti E_{t_1 \to t_2}(z)).$$
Note that $M_{t_1 \to t_2}$ is equal to the transfer matrix $M^*(t_2-t_1,z)$ of the real Dirac system whose potential
function $f^*(t)$ is equal to $f(t+t_1)$ for $0\leq t\leq t_2-t_1$ and to $0$ for $t>t_2-t_1$. Similarly,
the functions $E_{t_1 \to t_2}, \ti E_{t_1 \to t_2}, a_{t_1 \to t_2}$ and $b_{t_1 \to t_2}$ are equal to
the functions $E^*(t_2-t_1,z), \ti E^*(t_2-t_1,z), a^*(t_2-t_1,z)$ and  $b^*(t_2-t_1,z)$ generated by that system.
In particular, Parseval's identity for $a_{t_1 \to t_2}$ becomes
$$||\log |a_{t_1 \to t_2}|\ ||_{L^1(\R)}=|| f||^2_{L^2((t_1,t_2))}.$$

\qquad\qquad

If $t_1>T$, then
by Lemma \ref{cos} 
$$E(s, t_k) = \a_{t_k}\frac{\g(t_ky_{t_k})}{\sqrt{w(s)}}\sin [t_k(s-(x_{t_k}-iy_{t_k}))]$$ and $$\ti E(s, t_k) = \a_{t_k}\frac{\g(t_ky_{t_k})}{\sqrt{\ti w(s)}}\cos [t_k(s-(\ti x_{t_k}-iy_{t_k}))],$$
for $k=1,2$.
Hence
$$M(t_k,s)=\begin{pmatrix} 1/2 & 1/2 \\ -1/2i & 1/2i \end{pmatrix}\begin{pmatrix} E(t_k,s) & \ti E(t_k,s) \\ E^\#(t_k,s) & \ti E^\#(t_k,s) \end{pmatrix}=
\begin{pmatrix} 1/2 & 1/2 \\ -1/2i & 1/2i \end{pmatrix}\times$$$$\footnotesize \times
\begin{pmatrix}\a_{t_k}\frac{\g(t_k y_{t_k})}{\sqrt{w(s)}}\sin [t_k(s-(x_{t_k}-iy_{t_k}))] &
 \a_{t_k}\frac{\g(t_k y_{t_k})}{\sqrt{\ti w(s)}}\cos [t_k(s-(\ti x_{t_k}-iy_{t_k}))]\\
\bar \a_{t_k}\frac{ \g(t_k y_{t_k})}{\sqrt{w(s)}}\sin [t_k(s-(x_{t_k}+iy_{t_k}))] &
\bar \a_{t_k}\frac{ \g(t_k y_{t_k})}{\sqrt{\ti w(s)}}\cos [t_k(s-(\ti x_{t_k}+iy_{t_k}))] \end{pmatrix}
$$
for $k=1,2$.

The transfer matrix $M_{t_1 \to t_2}$ can be calculated as
$$M_{t_1 \to t_2}(z)=M(t_2,z) M(t_1,z)^{-1}=
$$$$
\frac 12 \begin{pmatrix} 1 & 1 \\ i & -i \end{pmatrix}\begin{pmatrix} E(t_2,z) &  \ti E(t_2,z) \\    E^\#(t_2,z) & \ti E^\#(t_2,z)
\end{pmatrix}\begin{pmatrix} E(t_1,z) & \ti E(t_1,z) \\  E^\#(t_1,z) & \ti E^\#(t_1,z) \end{pmatrix}^{-1} 2 \begin{pmatrix} 1 & 1 \\ i & -i \end{pmatrix}^{-1}.
$$
Also
$$
\begin{pmatrix} E_{t_1 \to t_2} & \ti E_{t_1 \to t_2} \\ E^\#_{t_1 \to t_2} & \ti E^\#_{t_1 \to t_2} \end{pmatrix}=2 \begin{pmatrix} 1 & 1 \\ i & -i \end{pmatrix}^{-1}M_{t_1 \to t_2}
$$
\begin{equation}
=\begin{pmatrix} E(t_2,z) &  \ti E(t_2,z) \\    E^\#(t_2,z) & \ti E^\#(t_2,z)\end{pmatrix}\frac 1{2i} \begin{pmatrix} \ti E^\#(t_1,z) & -\ti E(t_1,z) \\  -E^\#(t_1,z) & E(t_1,z) \end{pmatrix}
\begin{pmatrix} 1 & -i \\ 1 & i \end{pmatrix}.
\label{fin10}\end{equation}

After recalling that
$$a_{t_1 \to t_2}=\frac {e^{i(t_2-t_1)z}}2(E_{t_1 \to t_2}+i \ti E_{t_1 \to t_2})$$
we obtain
$$a_{t_1 \to t_2}(z)=$$$$=\frac {e^{i(t_2-t_1)z}}{4i}([(E(t_2,z)\ti E^\#(t_1,z)-\ti E(t_2,z)E^\#(t_1,z)) +$$$$+ (- E(t_2,z)\ti E(t_1,z) + \ti E(t_2,z)E(t_1,z))]
$$$$
+i[-i (E(t_2,z)\ti E^\#(t_1,z)-\ti E(t_2,z)E^\#(t_1,z)) +$$$$+i (- E(t_2,z)\ti E(t_1,z) + \ti E(t_2,z)E(t_1,z))])
$$$$
=\frac {e^{i(t_2-t_1)z}}{2i}( E(t_2,z)\ti E^\#(t_1,z)-\ti E(t_2,z)E^\#(t_1,z) ) .
$$

To shorten our next series of formulas we will use the notations
 $x_k=x_{t_k}-s$, $\ti x_k=\ti x_{t_k}-s$ and $y_k=y_{t_k}$ for $k=1,2$.
 Then $t_k(s-(x_{t_k}-iy_{t_k}))=-t_k(x_k-iy_k)$ and $t_k(s-(\ti x_{t_k}-iy_{t_k}))=-t_k(\ti x_k-iy_k)$. 

The last equation for $a_{t_1 \to t_2}$ leads to

$$a_{t_1 \to t_2}(s)=-\frac {e^{i(t_2-t_1)s}}{2i}( \a_{t_2}\frac{\g(t_2 y_2)}{\sqrt{w(s)}}\sin [t_2(x_2-iy_2)]\bar\a_{t_1}\frac{\g(t_1 y_1)}{\sqrt{\ti w(s)}}\cos [t_1(\ti x_1+iy_1)]- $$$$
\a_{t_2}\frac{\g(t_2 y_2)}{\sqrt{\ti w(s)}}\cos [t_2(\ti x_2-iy_2)] \bar \a_{t_1}\frac{\g(t_1 y_1)}{\sqrt{ w(s)}}\sin [t_1(x_1+iy_1)]    )=
$$$$
=-\frac {e^{i(t_2-t_1)s}}{2i}\frac{\g(t_1 y_1)\g(t_2 y_2)}{\sqrt{w(s)\ti w(s)}}\a_{t_2}\bar\a_{t_1} (\sin [t_2(x_2-iy_2)]\cos [t_1(\ti x_1+iy_1)]-$$$$-\cos [t_2(\ti x_2-iy_2)]\sin [t_1(x_1+iy_1)]   ).
$$
Using several trigonometric identities 
the last expression can be further simplified:
$$
\sin [t_2(x_2-iy_2)]\cos [t_1(\ti x_1+iy_1)]-\cos [t_2(\ti x_2-iy_2)]\sin [t_1(x_1+iy_1)] =
$$$$
=\frac 12(\sin [t_2(x_2-iy_2)+t_1(\ti x_1+iy_1)] +\sin [t_2(x_2-iy_2)-t_1(\ti x_1+iy_1)]-
$$$$
-\sin [t_1(x_1+iy_1)+t_2(\ti x_2-iy_2)]  - \sin [t_1(x_1+iy_1)-t_2(\ti x_2-iy_2)])=
$$$$
=\cos\left[\frac 12 ((t_2(x_2-iy_2)+t_1(\ti x_1+iy_1))+(t_1(x_1+iy_1)+t_2(\ti x_2-iy_2)))\right]\times
$$$$\times\sin\left[\frac 12((t_2(x_2-iy_2)+t_1(\ti x_1+iy_1))-(t_1(x_1+iy_1)+t_2(\ti x_2-iy_2)))\right]+
$$$$
+\cos\left[\frac 12 ((t_2(x_2-iy_2)-t_1(\ti x_1+iy_1)) + (t_1(x_1+iy_1)-t_2(\ti x_2-iy_2)))\right] \times
$$$$\times\sin \left[\frac 12((t_2(x_2-iy_2)-t_1(\ti x_1+iy_1)) - (t_1(x_1+iy_1)-t_2(\ti x_2-iy_2)))\right]=
$$
$$
=\cos\left[\frac 12 (t_2(x_2+\ti x_2)+ t_1(\ti x_1+x_1) ) - i(t_2y_2-t_1y_1)\right]\times
$$$$\times\sin\left[\frac 12(t_2(x_2-\ti x_2) -t_1(x_1 -\ti x_1))\right]+
$$
$$
+\cos\left[\frac 12 (t_2(x_2-\ti x_2) + t_1(x_1-\ti x_1) )\right] \times
$$$$\times\sin \left[\frac 12(t_2(x_2+\ti x_2)-t_1(\ti x_1+x_1)) - i  (t_2y_2+t_1y_1)\right]
$$
 Recall that by \eqref{q-y}, $\cos [t(x_t-\ti x_t)]=\sqrt{w(s)\ti w(s)}$.
 Since $x_t$ changes continuously with $t$, it follows that $t_2(x_2-\ti x_2) =t_1(x_1 -\ti x_1)$ and the last expression is equal to
$$ \sqrt{w(s)\ti w(s)}\sin \left[\frac 12(t_2(x_2+\ti x_2)-t_1(\ti x_1+x_1)) - i  (t_2y_2+t_1y_1)\right].
$$
Altogether we obtain
$$a_{t_1 \to t_2}(s)=$$$$=-\frac {e^{i(t_2-t_1)s}\g(t_1 y_1)\g(t_2 y_2)}{2i}\a_{t_2}\bar\a_{t_1}\sin \left[\frac 12(t_2(x_2+\ti x_2)-t_1(\ti x_1+x_1)) - i  (t_2y_2+t_1y_1)\right]=
$$$$=
\frac { ie^{i(t_2-t_1)s}\a_{t_2}\bar\a_{t_1}}{\sqrt{|\sin [2it_1y_1]\sin [2it_2y_2]|}}\left(\sin \left[\frac 12(t_2(x_2+\ti x_2)-t_1(\ti x_1+x_1)) - i  (t_2y_2+t_1y_1)\right]\right)=
$$$$
=\frac { ie^{i(t_2-t_1)s}\a_{t_2}\bar\a_{t_1}}{\sqrt{|\sin [2it_1y_1]\sin [2it_2y_2]|}}\sin [(t_2x_2-t_1x_1) - i  (t_2y_2+t_1y_1)].
$$

If we put $t_1x_1=u, t_2x_2=u+\e_1, t_1y_1=v, t_2y_2=v+\e_2$ then the last equation
becomes
\begin{equation}
a_{t_1 \to t_2}(s)= \frac {i e^{i(t_2-t_1)s}\a_{t_2}\bar\a_{t_1}}{\sqrt{|\sin [2iv]\sin [2i(v+\e_2)]|}}\sin[(\e_1 -i\e_2)  - 2iv] .
\label{eqAAA}
\end{equation}

Similarly, from \eqref{fin10},
$$b_{t_1 \to t_2}=\frac {e^{i(t_2-t_1)z}}2(E_{t_1 \to t_2}+i \ti E_{t_1 \to t_2})=$$
$$=\frac {e^{i(t_2-t_1)z}}{4i}([(E(t_2,z)\ti E^\#(t_1,z)-\ti E(t_2,z)E^\#(t_1,z)) +$$$$+ (- E(t_2,z)\ti E(t_1,z) + \ti E(t_2,z)E(t_1,z))]
$$$$
-i[-i (E(t_2,z)\ti E^\#(t_1,z)-\ti E(t_2,z)E^\#(t_1,z)) +$$$$+i (- E(t_2,z)\ti E(t_1,z) + \ti E(t_2,z)E(t_1,z))])
$$
\begin{equation}\frac {e^{i(t_2-t_1)z}}{2i}( \ti E(t_2,z)E^\#(t_1,z)-E(t_2,z)\ti E^(t_1,z)) .
\label{fin11}\end{equation}
Next, notice that 
$$\ti E(t_2,z)E^\#(t_1,z)-E(t_2,z)\ti E^(t_1,z)=$$$$
=\a_{t_2}\frac{\g(t_2 y_2)}{\sqrt{\ti w(s)}}\cos [t_2(\ti x_2-iy_2)]  \a_{t_1}\frac{\g(t_1 y_1)}{\sqrt{ w(s)}}\sin [t_1(x_1-iy_1)] -$$ 
$$  - \a_{t_2}\frac{\g(t_2 y_2)}{\sqrt{w(s)}}\sin [t_2(x_2-iy_2)]\a_{t_1}\frac{\g(t_1 y_1)}{\sqrt{\ti w(s)}}\cos [t_1(\ti x_1-iy_1)]  $$
and, using \eqref{fin12} at the end, 
$$\cos [t_2(\ti x_2-iy_2)]  \a_{t_1}\sin [t_1(x_1-iy_1)] -\sin [t_2(x_2-iy_2)]\cos [t_1(\ti x_1-iy_1)=$$
$$=\frac 12 ( \sin  (t_2\ti x_2+t_1 x_1-i(t_2y_2+t_1y_1)) +\sin (t_1 x_1-t_2\ti x_2-i(t_1y_1-t_2y_2))  -
$$
$$-    \sin  (t_2 x_2+t_1 \ti x_1-i(t_2y_2+t_1y_1)) -\sin (t_2 x_2-t_1 \ti x_1-i(t_2y_2-t_1y_1))  )=$$
$$=
\frac 12( \sin (t_1 x_1-t_2\ti x_2-i(t_1y_1-t_2y_2)) -\sin (t_2 x_2-t_1 \ti x_1-i(t_2y_2-t_1y_1)) )=
$$
$$=
\frac 12(\sin (t_1(  x_1-\ti x_1)-(\e_1-i\e_2)) -\sin (t_1(  x_1-\ti x_1)+(\e_1-i\e_2)))=
$$
$$=\sin (\e_1-i\e_2),$$
which gives 
\begin{equation}b_{t_1 \to t_2}=\frac {i e^{i(t_2-t_1)s}\a_{t_2}\a_{t_1}}{\sqrt{|\sin [2iv]\sin [2i(v+\e_2)]|}}\sin (\e_1-i\e_2).
\label{eqBBB}\end{equation}

For 
$$\psi(t_1,t_2,s) =\left( f|_{[t_1,t_2]}   \right) \nlhat{\frac{}{} }(s)$$
we obtain
$$\psi(t_1,t_2,s)=\frac{b_{t_1 \to t_2}(s)}{a_{t_1 \to t_2}(s)}=\a_{t_1}^2\frac{\sin[\e_1   -i\e_2]}{\sin[\e_1   - 2iv-i\e_2] }.$$



$$$$

{\bf Resonances in the box $Q(s,C/t)$.} Our next statement shows that a resonance can only stay near $s$ for a short interval of $t$.

\begin{lemma}\label{corBox} For a. e. $s\in \R$, for any $C>0$, if $I_n(s)$ is the longest sub-interval of $[2^n,2^{n+1}]$  such that $Q(C/t,s)$ contains a zero of $E$ for all
	$t\in I_n(s)$, then $|I_n(s)|/2^n\to 0$ as $n\to\infty$. 
\end{lemma}

\begin{proof} Let $\Sigma, |\Sigma|>0,$ be a set of $s$ such that $I_n(s)$ satisfy
	$$\limsup_{n\to\infty}|I_n(s)|/2^n>2/m$$
	for some $m\in\N$. Let $\Sigma_n$ be the set of $s$ for which $|I_n(s)|>2^n/m$. Then for a subset $S_n$ of $\Sigma_n$ of the size
	$\asymp |\Sigma_n|/m$ the middle thirds of the  intervals $I_n(s)$ have a common point $t_1$ for all $s\in S_n$. Put $t_2=t_1+2^n/3m$.
	
	Using \eqref{eqAAA} and recalling that 
	$t_2(x_{t_2}-s)-t_1(x_{t_1}-s)=\e_1,  t_2y_{t_2}-t_1y_{t_1}=\e_2$ we get that for all $p\in J(s)=\R\cap Q(s,C/2^{n+1})$,
	$$
	|a_{t_1 \to t_2}(p)|\gtrsim |\sin[(\e_1 -i\e_2)  - 2iv+(t_2-t_1)(s-p)]|+o(1)= $$$$= |\sin[(\e_1 -i\e_2)  - 2iv+\frac{2^n(s-p)}{3m}]|+o(1).$$ 
	As $p$ ranges over $J(s), |J(s)|=C/2^{n+1}$, the last function differs from any constant by $\gtrsim 1/m$.
	It follows that 
	$$\int_{J(s)}\log|a_{t_1 \to t_2}(p)|dp\gtrsim |J(s)|/m$$
	when $o(1)$ is small enough. 
	
	Choosing a cover of index two of $S_n$ with the intervals $J(s), s\in S_n$, we conclude that 
	$$|| \log|a_{t_1 \to t_2} ||_1\gtrsim |S_n|.$$
	By the non-linear Parseval equation,
	$$|S_n|\lesssim ||f||^2_{L^2((t_1,t_2))},$$
	which yields
	$$|\Sigma_n|\lesssim ||f||^2_{L^2((2^n,2^{n+1}))}.$$
	Since $\Sigma\subset \cup_{n>N}\Sigma_n$ for all $N>0$, it follows that $|\Sigma|=0$.
	\end{proof}



{\bf Potential modification.} To obtain a contradiction in the next step of the proof we will use the following operation on the potential function $f$. 

Let us consider the modified potential function $f^*$ defined as 
$$f^*(t)=\begin{cases}
	f(t)\text{ for } t<t_1\\
	0\text{ for } t_1\leq t< t_1+\beta\\
	f(t-\beta)\text{ for } t\geq t_1+\beta
\end{cases}
$$
for some positive $\beta$ to be specified later. I.e., $f^*$ is obtained from $f$ by inserting a zero interval of length $\b$ after $t_1$. The functions $E^*,\ti E^*, a^*, b^*$, etc., will now correspond to the system
with the new potential $f^*$.

 Let $\Delta>0$ be a small but fixed constant. 
We assume that  the zero of $E$ moves between $t_1$ and $t_2$ by hyperbolic distance comparable to  $\Delta$. 
Such an assumption can be made because, by Lemma \ref{corBox}, each time a resonance visits the box $Q(s,C/t)$ it must later exit the box $Q(s,2C/t)$, and therefore move by a large hyperbolic distance.

If $t_2>t_1>T$ are large enough we can assume that $o(1)$ in the conclusion of Lemma \ref{lemmacos} is $<<\Delta^4$.
Note that Lemma \ref{corBox} implies that
for sufficiently large $T$,  $\frac{t_2-t_1}{t_1}<<\Delta^3$.

In what follows we will write $\approx$ if the two quantities are equal up to an error of the size $\lesssim \Delta^4+\frac{t_2-t_1}{t_1}<<\Delta^3$. 


%

Next, we will show that  $E^*_{0\to t_2+\beta}(s), \ti E^*_{0\to t_2+\beta}(s)$ can be approximated similar to Lemma \ref{lemmacos}. 
Using again the transfer matrix multiplication together with the relation $E^*_{t_1\to t_1+\b}(s)=e^{-i\b s}, \ti E^*_{t_1\to t_1+\b}(s)=-ie^{-i\b s}$, we obtain

$$\begin{pmatrix} E^*(t_2+\beta,s) & \ti E^*(t_2+\beta,s) \\ (E^*)^\#(t_2+\beta,s) &( \ti E^*)^\#(t_2+\beta,s) \end{pmatrix}=\begin{pmatrix} E_{t_1\to t_2}(s) & \ti E_{t_1\to t_2}(s) \\ E^\#_{t_1\to t_2}(s)& E^\#_{t_1\to t_2}(s) \end{pmatrix}\begin{pmatrix} 1/2 & 1/2 \\ -1/2i & 1/2i \end{pmatrix}\times$$$$
\times\begin{pmatrix} e^{-i\beta s} & -ie^{-i\beta s} \\ e^{i\beta s}  & ie^{i\beta s} \end{pmatrix}\begin{pmatrix} 1/2 & 1/2 \\ -1/2i & 1/2i \end{pmatrix}\begin{pmatrix} E(t_1,s) & \ti E(t_1,s) \\ E^\#(t_1,s) & \ti E^\#(t_1,s) \end{pmatrix}\approx
$$\begin{equation}
\begin{pmatrix} c & d \\ q & r \end{pmatrix}\begin{pmatrix} \frac 1{\sqrt{w(s)}}\sin (t_1(s-z(t_1))) &  \frac 1{\sqrt{\ti w(s)}}\cos (t_1(s-z(t_1)+p))  \\  \frac 1{\sqrt{w(s)}}\sin (t_1(s-\bar z(t_1))) & \frac 1{\sqrt{\ti w(s)}} \cos (t_1(s-\bar z(t_1)+p))  \end{pmatrix}
\label{eqMat}\end{equation}
for some complex constants $c,d,q,r$ and a real constant $p=\ti x_1-x_1,\cos p =\sqrt{w(s)\ti w(s)}$.

Hence, using trigonometric identities,
$$\sqrt{w(s)}E^*(t_2+\beta,s)\approx c \sin (t_1(s-z(t_1))) +d\sin (t_1(s-\bar z(t_1)))\approx$$$$
\approx c \sin (t_2(s-z(t_1))) +d\sin (t_2(s-\bar z(t_1)))\approx$$$$
\approx c \sin ((t_2+\b)(s-z(t_1))) +d\sin ((t_2+\b)(s-\bar z(t_1))).$$
Here we use that $|s-z(t_1)|<2C/t_1$ and $(t_2-t_1)/t_1, \b/t_1$ are small.
Our chain of trigonometric identities can be further continued:
$$\sqrt{w(s)}E^*(t_2+\beta,s)\approx c \sin ((t_2+\b)(s-z(t_1))) +d\sin ((t_2+\b)(s-\bar z(t_1)))=$$$$
= A\sin((t_2+\b)s)+B\cos((t_2+\b)s),$$
where 

\begin{equation}\begin{gathered}A=c\cos ((t_2+\b)z(t_1))+d\cos ((t_2+\b)\bar z(t_1)),\\ B= -d\sin ((t_2+\b)\bar z(t_1))-c\sin ((t_2+\b)z(t_1)).\end{gathered}\label{eqAB000}\end{equation}

Notice that $A^2+B^2\neq 0$ if $\Delta << v$. Indeed, in this case \eqref{eqAAA} and \eqref{eqBBB} show that $a_{t_1\to t_2}$ is close (within $\asymp \DD/v$) to $1$ and $b_{t_1\to t_2}$ is close to 0.  Hence, by \eqref{eqab}, 
$$\begin{pmatrix} E_{t_1\to t_2}(s) & \ti E_{t_1\to t_2}(s) \\ E^\#_{t_1\to t_2}(s)&\ti  E^\#_{t_1\to t_2}(s) \end{pmatrix}
$$
is close to 
$$\begin{pmatrix} e^{-i(t_2-t_1)s} & -ie^{-i(t_2-t_1)s} \\ e^{i(t_2-t_1)s}& ie^{i(t_2-t_1)s}\end{pmatrix}.$$
it follows that in \eqref{eqMat} the matrix
$$\begin{pmatrix} c & d \\ q & r \end{pmatrix}$$
is close to 
$$\gamma(v)\begin{pmatrix} e^{-i((t_2-t_1)s+\b)} & 0 \\ 0& ie^{i((t_2-t_1)s+\b)}\end{pmatrix},$$
i.e., $|c|$ is close to $\gamma(v)$ and $|d|$ is close to zero. Now we can see that $|A^2+B^2|$ given by \eqref{eqAB000} is close to $\g^2(v)\neq 0$. 

By choosing a complex constant $C$ such that $C^2=A^2+B^2$ and choosing the complex number $z^*(t_2+\beta)$ to be the closest number to $z(t_1)$ with the property that 
$$\sin [z^*(t_2+\beta)]=\frac A{C}\text{ and } \cos[ z^*(t_2+\beta)]=\frac B{C},$$
we obtain 
$$\sqrt{w(s)}E^*(t_2+\beta,s)\approx 
A\sin((t_2+\b)s)+B\cos((t_2+\b)s)=C\sin((t_2+\b)(s-z^*(t_2+\beta))).$$

Similarly,
$$\sqrt{\ti w(s)}\ti E^*(t_2+\beta,s)\approx a \cos (t_1(s-z(t_1)+p)) +b\cos (t_1(s-\bar z(t_1)+p))\approx$$
$$\approx a \sin ((t_2+\beta)(s-z(t_1)+p)-\pi/2) +b\sin ((t_2+\beta)(s-\bar z(t_1)+p)-\pi/2)\approx$$
$$\approx A\sin((t_2+\beta)(s+p)-\pi/2)+B\cos((t_2+\beta)(s+p)-\pi/2)\approx$$$$\approx C\sin((t_2+\b)(s-z^*(t_2+\beta)+p)-\pi/2)=$$
$$=C\cos((t_2+\b)(s-z^*(t_2+\beta)+p)).$$

Using the relation \eqref{eqDet2i}, similar to the argument from the proof of Lemma \ref{sinus}, one can show that $C$ can be chosen as 
$$C=\alpha^*(t_2+\b)\g((t_2+\b)|\Im z^*(t_2+\b)|)
$$
for some unimodular constant $\alpha^*(t_2+\b)$.

Let $$\e=t_2(s-z(t_2))-t_1(s-z(t_1)),$$$$
  \e^*=(t_2+\b)(s-z^*(t_2+\b))-(t_1+\b)(s-z^*(t_1+\b))$$ 
 (where $z^*(t_1+\b)=z(t_1)$, as follows from \eqref{eqDynRes} since $f^*=0$ on $(t_1,t_1+\b)$).

By our construction $\psi(t_1,t_2,s)$ should coincide with $\psi^*(t_1+\beta,t_2+\beta,s)$, i.e., 

$$\a_{t_1}^2\frac{\sin[\e_1   -i\e_2]}{\sin[\e_1 -i\e_2  - 2iv] }=(\a^*_{t_1+\beta })^2\frac{\sin[\e^*_1   -i\e^*_2]}{\sin[\e^*_1  -i\e^*_2 - 2iv^*] }\approx$$\begin{equation}\approx (\a^*_{t_1+\beta })^2\frac{\sin[\e^*_1   -i\e^*_2]}{\sin[\e^*_1  -i\e^*_2 - 2iv] }
\label{eq11A}\end{equation}
since $v\approx v^*$.

Note that the choice of the constant $\beta$ in the definition of $f^*$ affects the value of $\a^*_{t_1+\beta }$: 
$$(\a^*_{t_1+\beta })^2\approx e^{-2i\beta s}\a_{t_1}^2,$$ 
as can be seen, for instance, from \eqref{eqArgE}. 


Then   \eqref{eq11A}  implies
\begin{equation}\frac{\sin[\e]}{\sin[\e   - 2iv] }\approx e^{-2i\beta s}\frac{\sin[\e^{*}]}{\sin[\e^{*}   - 2iv] }.
	\label{eq11B}\end{equation}
	
	This relation implies in particular that by changing $\b$ in the definition of $f^*$ we can change the argument of $\e^*$, which will be used in our constructions below.

{\bf Special case of resonance motion.}
First assume that for some $t_1<t_2<t_3$ the resonance moves in the following way: 

$\bullet$ for the interval $(t_1,t_2)$ and $\e=\e_1-i\e_2$ defined as above, we have $\e=i\DD$;

$\bullet$ for the interval $(t_2,t_3)$ we have $\tau_1=t_3(s-x_{t_3})-t_2(s-x_{t_2})=0$ and $\tau_2=t_3y_{t_3}-t_2y_{t_2}=\DD$ so that for the motion $\tau$ of the resonance on $(t_2,t_3)$
we again have that $\tau=\tau_1-i\tau_2=i\DD$.  

Let us now insert a zero  interval in the definition of $f^*$ as described above and choose $\beta\approx\pi/4$ so that 
$\a^*_{t_1+\b}=-\a_{t_1}$ and 
\eqref{eq11A} gives
$$\frac{\sin[\e]}{\sin[\e   - 2iv] }=\frac{\sin[i\DD]}{\sin[i(\DD   - 2v)] }=-\frac{\sin[\e^{*}]}{\sin[\e^{*}   - 2iv^*] }.$$
It follows that $\e^*$ is purely imaginary,  $\e^*= -i\DD_1$ for some $\DD_1>0$.

Next, notice that for $\tau^*=\tau^*_1-i\tau^*_2$, where 
$$\tau^*_1=t_3(s-x^*_{t_3+\b})-t_2(s-x^*_{t_2+b})\text{ and }\tau^*_2=t_3y^*_{t_3+\b}-t_2y^*_{t_2+\b},$$
by applying the same argument to the intervals $(t_1,t_3)$ and $(t_1+\b,t_3+\b)$ we will  have 
$$\frac{\sin[2i\DD]}{\sin[i(2\DD   - 2v)] }=-\frac{\sin[\e^*+\tau^{*}]}{\sin[\e^{*} +\tau^*  - 2iv^*] }
=-\frac{\sin[\tau^{*}-i\DD_1]}{\sin[\tau^*   -i(\DD_1+2v^*)] }.$$
It follows again that $\tau^*$ is purely imaginary, $\tau^*=-i\DD_2$ for some $\DD_2>0$. 

If one now applies  \eqref{eq11B} for the intervals $(t_1,t_2)$ and  $(t_1+\b,t_2+\b)$, comparing the absolute values of 
the functions $\psi(t_1,t_2,s)$ and $\psi(t_1+\b,t_2+\b,s)$ we get
\[
\frac{\sinh \Delta_1}{\sinh(v+\Delta_1)}
\approx\frac{\sinh \Delta}{\sinh(v-\Delta)}.
\]
For the intervals $(t_2,t_3)$ and  $(t_2+\b,t_3+\b)$ we obtain 
\[
\frac{\sinh \Delta_2}{\sinh(v+\Delta_1+\Delta_2)}
\approx \frac{\sinh \Delta}{\sinh(v-2\Delta)}.
\]
 Finally, for $(t_1,t_3)$ and  $(t_1+\b,t_3+\b)$,
 	\[
 \frac{\sinh(2\Delta)}{\sinh(v-2\Delta)}
 \approx
 \frac{\sinh(\Delta_1+\Delta_2)}{\sinh(v+\Delta_1+\Delta_2)} .
 \]
 
 However, Lemma \ref{lemAppendix} in the Appendix implies that for small enough $\DD$ the constants $\DD_1$ and $\DD_2$ satisfying the last three relations do not exist.

{\bf General case.} Let now $t_1\leq t_2\leq t_3$ be such that $\DD_1=|\e|$ and $\DD_2=|\tau|$, where as before $\e$ corresponds to the motion of the resonance on $(t_1,t_2)$ and $\tau$ on $(t_2,t_3)$, $\DD_1,\DD_2\asymp \DD$. 

Insert a zero interval $(t_1,t_1+\b_1)$ after $t_1$ to obtain $f^*$, where $\b_1$ is chosen so  that $\e^*$
corresponding to the interval $(t_1,t_2+\b_1)$ is equal to $i\DD_1^*$ for some $\DD_1^*\geq 0$. 

(Note that the existence of such a $\b_1$ is implied by \eqref{eq11B}. Indeed, as $\b$ ranges over an interval of length about $2\pi$,
$\e^*$ satisfying \eqref{eq11B} makes a full turn around the origin.)

After that insert a second zero interval $(t_2+\b_1, t_2+\b_1+\b_2)$ after $t_2+\b_1$ to obtain $f^{**}$ from $f^*$ so that 
for the motion of the resonance on $(t_2+\b_1, t_3+\b_1+\b_2)$ we had $\tau^{**}=i\DD_2^{**}$
 with $\DD_2^{**}\geq 0$. 

Note that by moving $t_2$ between $t_1$ and $t_3$ one can achieve $\DD^*_1=\DD^{**}_2$ in the above construction. Indeed,
if $t_1=t_2$ then $0=\DD_1\approx \DD^*_1<\DD^{**}_2$ and if $t_2=t_3$ then
$\DD_1^*>\DD_2^{**}\approx \DD_2=0$. By continuity this implies that $t_2$ can be chosen between $t_1$ and $t_3$ so that
$\DD^*_1=\DD^{**}_2\asymp \DD$. 

Now, for the modified potential $f^{**}$ the motion of the resonance over the intervals $(t_1, t_2+\b_1)$ and $(t_2+\b_1,t_3+\b_1+\b_2)$, fits the special case considered in the previous part of the proof and Lemma \ref{lemAppendix}, once again, yields a 
contradiction.



{\bf Final step}. Note that our calculations were made under the assumption of the presence of a zero of $E(t_0)$ in the box $Q(C/t, s)$ for a sufficiently large $t$. The obtained contradiction shows that for any $C>0$, for  a. e. $s$  there exists $T=T(s)$ such $Q(C/t, s)$ contains no zeros of $E$ for $t>T$.

Hence, for a.e. $s\in \R$ there exists $C(t)>0,\ C(t)\to\infty$ as $t\to\infty$, such that
$Q(s,C(t)/t)$ has no zero of $E(t,\cdot)$. Corollary \ref{cor02}, part 2), now implies that
for a.e. $s$ and any $D>0$,
\begin{equation}\sup_{Q(s,D/t)}\left|E(t,z)-\frac {-i\a(s,t)}{\sqrt{ w(s)}}e^{itz}\right|=o(1)\label{eq990}\end{equation}
as $t\to\infty$. Since $|\a|=1$, it follows that
$$|E(t,s)|\to \frac 1{\sqrt{w(s)}}$$
as $t\to\infty$ for a.e. $s$.

Similarly, for a.e. $s$,
\begin{equation}\sup_{Q(s,D/t)}\left|\ti E(t,z)-\frac {-i\b\a}{\sqrt{\ti w(s)}}e^{itz}\right|=o(1)\label{eq991}\end{equation}
as $t\to\infty$, where $\a$ is the same as in the previous formula.
The equation \eqref{eqDet2i} implies that $\beta(t)$ can be chosen as a constant
 $\b=e^{-i\phi},\ \phi=\arcsin \sqrt{w(s)\ti w(s)}$.
Therefore
$$|\ti E(t,s)|\to \frac 1{\sqrt{\ti w(s)}}$$
as $t\to\infty$ for a.e. $s$.

Together with \eqref{eqab} this implies
$$|a(t,s)|\to\frac 12\sqrt{\frac 1{w(s)}+\frac 1{\ti w(s)}+2\frac{\cos\left(\frac \pi 2 -\phi\right)}{\sqrt{w(s)\ti w(s)}}}=\frac 12\sqrt{\frac 1{w(s)}+\frac 1{\ti w(s)}+2}$$
at a.e. $s$ and
$$|b(t,s)|\to\frac 12\sqrt{\frac 1{w(s)}+\frac 1{\ti w(s)}-2\frac{\cos\left(\frac \pi 2 -\phi\right)}{\sqrt{w(s)\ti w(s)}}}=\frac 12\sqrt{\frac 1{w(s)}+\frac 1{\ti w(s)}-2}$$
at a.e. $s$ as $t\to\infty$. 

Note that Remark \ref{remAC}, along with the inequality of arithmetic and geometric means, implies that the last expression under the root is non-negative. 
Convergence of $|a|$ a.e. and convergence of $||\log|a| ||_1$, which follows from Parseval's identity, implies
convergence of $\log|a|$ in $L^1$.

Also,
$$\sup_{Q(s,D/t)}\left|\frac ba - \frac{\frac 1{\sqrt{w(s)}}+\frac {i\b}{\sqrt{\ti w(s)}}}{\frac 1{\sqrt{w(s)}}-\frac {i\b}{\sqrt{\ti w(s)}} }   \right|=
o(1)$$
and therefore $\nlhat f=b/a$ converges pointwise a.e. on $\R$.

\section{Appendix}
\begin{lemma}\label{lemAppendix}
	Let $v>0$ be fixed. Let $\Delta$ be  a small positive constant and let 
	$\Delta_1>0$ and $\Delta_2>0$ be such that
	\[
	\left|\frac{\sinh \Delta_1}{\sinh(v+\Delta_1)}
	-\frac{\sinh \Delta}{\sinh(v-\Delta)}\right| <\Delta^3
	\]and \[
	\left|\frac{\sinh \Delta_2}{\sinh(v+\Delta_1+\Delta_2)}
	-\frac{\sinh \Delta}{\sinh(v-2\Delta)}\right| <\Delta^3.
	\]
	Then there exist constants $c=c(v)>0$ and $\Delta_0=\Delta_0(v)>0$ such that
	for all $0<\Delta<\Delta_0$,
	\[
	\frac{\sinh(2\Delta)}{\sinh(v-2\Delta)}
	-
	\frac{\sinh(\Delta_1+\Delta_2)}{\sinh(v+\Delta_1+\Delta_2)}
	\;>\;
	c\,\Delta^2 .
	\]
\end{lemma}

\begin{proof}
	Put
	\[
	F_v(x)=\frac{\sinh x}{\sinh(v+x)}, 
	\qquad s_v=\sinh v,\quad c_v=\cosh v.
	\]
	
	\medskip
	\noindent\textbf{Expansion of $F_v(x)$ at $x=0$.}  
	We use
	\[
	\sinh(v+x)=s_v + c_v x + \tfrac12 s_v x^2 + O(x^3), 
	\qquad
	\sinh x = x + \tfrac{1}{6}x^3 + O(x^5).
	\]
	Thus
	\[
	F_v(x)
	= \frac{x}{s_v} - \frac{c_v}{s_v^2}x^2 + O(x^3).
	\]
	Consequently,
	\[
	\frac{\sinh \Delta}{\sinh(v-\Delta)}
	= \frac{\Delta}{s_v} + \frac{c_v}{s_v^2}\Delta^2 + O(\Delta^3),
	\qquad
	\frac{\sinh (2\Delta)}{\sinh(v-2\Delta)}
	= \frac{2\Delta}{s_v} + \frac{4c_v}{s_v^2}\Delta^2 + O(\Delta^3).
	\]
	
	\medskip
	\noindent\textbf{Expansion of $\Delta_1$.}
	To expand $\DD_1$ in terms of $\DD$, let us write
	\[
	\Delta_1=\Delta+\alpha_1 \Delta^2+O(\Delta^3).
	\]
	Since
	\[
	F_v(\Delta_1)=\frac{\sinh\Delta}{\sinh(v-\Delta)} + O(\Delta^3),
	\]
	matching coefficients of $\Delta^2$ gives
	\[
	\frac{\alpha_1}{s_v}-\frac{c_v}{s_v^2}= \frac{c_v}{s_v^2},
	\]
	so
	\[
	\alpha_1=\frac{2c_v}{s_v},
	\qquad
	\Delta_1=\Delta+\frac{2c_v}{s_v}\Delta^2+O(\Delta^3).
	\]
	
	\medskip
	\noindent\textbf{Expansion of $\Delta_2$.} 
	To obtain an expansion for $\DD_2$ we can write
	\[
	\Delta_2=\Delta+\alpha_2\Delta^2+O(\Delta^3),\qquad
	u:=\Delta_1+\Delta_2.
	\]
	Using the expansion of $\Delta_1$ we have
	\[
	u=2\Delta+(\alpha_1+\alpha_2)\Delta^2+O(\Delta^3).
	\]
	The (approximate) defining equation is
	\[
	\frac{\sinh\Delta_2}{\sinh(v+u)}
	= \frac{\sinh\Delta}{\sinh(v-2\Delta)} + O(\Delta^3).
	\]
	
	The numerator expands as
	\[
	\sinh\Delta_2=\Delta+\alpha_2\Delta^2+O(\Delta^3).
	\]
	The denominator satisfies
	\[
	\sinh(v+u)=s_v + 2c_v\Delta 
	+ \bigl(c_v(\alpha_1+\alpha_2)+2s_v\bigr)\Delta^2 + O(\Delta^3).
	\]
	Using the reciprocal expansion and keeping terms up to $\Delta^2$ gives
	\[
	\frac{\sinh\Delta_2}{\sinh(v+u)}
	= \frac{\Delta}{s_v}
	+ \Bigl(-\frac{2c_v}{s_v^2}+\frac{\alpha_2}{s_v}\Bigr)\Delta^2
	+ O(\Delta^3).
	\]
	The right-hand side expands as
	\[
	\frac{\sinh\Delta}{\sinh(v-2\Delta)}
	= \frac{\Delta}{s_v} + \frac{2c_v}{s_v^2}\Delta^2 + O(\Delta^3).
	\]
	Matching the coefficients of $\Delta^2$ gives
	\[
	-\frac{2c_v}{s_v^2}+\frac{\alpha_2}{s_v}
	= \frac{2c_v}{s_v^2},
	\qquad
	\text{so}\qquad
	\alpha_2=\frac{4c_v}{s_v}\qquad
	\text{and}\qquad
\a_1+	\alpha_2=\frac{6c_v}{s_v}.
	\]
	Thus
	\[
	\Delta_2=\Delta+\frac{4c_v}{s_v}\Delta^2+O(\Delta^3)\quad\text{and}\quad
	u=\Delta_1+\Delta_2=2\Delta+\frac{6c_v}{s_v}\Delta^2+O(\Delta^3).
	\]
	
	\medskip
	\noindent\textbf{Final expansion.}
	Using the expansion of $F_v$,
	\[
	\frac{\sinh(u)}{\sinh(v+u)}
	= \frac{2\Delta}{s_v}+\frac{2c_v}{s_v^2}\Delta^2 + O(\Delta^3).
	\]
	Subtracting from the expansion of 
	\(\dfrac{\sinh(2\Delta)}{\sinh(v-2\Delta)}\) gives
	\[
	\frac{\sinh(2\Delta)}{\sinh(v-2\Delta)}
	-
	\frac{\sinh(\Delta_1+\Delta_2)}{\sinh(v+\Delta_1+\Delta_2)}
	= \frac{2c_v}{s_v^2}\Delta^2 + O(\Delta^3),
	\]
	which implies the statement.
\end{proof}

\newpage

\end{document}